\documentclass[12pt, a4paper]{article}
\usepackage[margin=1in, top=1.5in, bottom=1.5in]{geometry}
\usepackage{subcaption}
\usepackage{graphicx}
\usepackage{float}
\usepackage{titlesec}
\usepackage{amsmath}
\usepackage{amssymb}
\usepackage{amsthm}
\usepackage{cases}
\usepackage[english]{babel}
\usepackage{hyperref}
\usepackage{enumitem}
\usepackage{authblk}
\usepackage{dsfont}
\usepackage{sectsty}
\usepackage{xcolor}

\usepackage{todonotes}

\theoremstyle{break}
	\newtheorem{thm}{Theorem}[section]
\theoremstyle{break}
    \newtheorem{prp}{Proposition}[section]
\theoremstyle{break}
	\newtheorem{asm}{Assumption}[section]
\theoremstyle{break}
	\newtheorem{rem}{Remark}[section]
\theoremstyle{break}
	\newtheorem{lem}{Lemma}[section]
\theoremstyle{break}
	
    \theoremstyle{break}
	
\theoremstyle{break}
	
\theoremstyle{break}
	\newtheorem{example}{Example}[section]

\makeatletter
\newcommand{\mylabel}[2]{#2\def\@currentlabel{#2}\label{#1}}
\makeatother

\counterwithin{equation}{section}

\title{Eigenvalues and eigenfunctions of the non-local dispersal with Neumann-type boundary condition and symmetric kernel}

\author[1]{Maciej Tadej \thanks{maciej.tadej@math.uni.wroc.pl}}

\affil[1]{\textit{Mathematical Institute, University of Wrocław, pl. Grunwaldzki 2, 50-384 Wrocław, Poland}}

\date{}

\begin{document}

\maketitle

\hspace{20pt}

\begin{abstract}
    In this paper, we investigate the eigenvalue problem for a non-local dispersal operator defined on a bounded spatial domain with Neumann-type boundary conditions. Unlike the classical Laplacian, the non-local operator lacks compactness, which gives rise to an essential (continuous) spectrum and severely complicates the study of its discrete eigenvalues. The main contribution of this work is the rigorous variational construction of a finite or infinite sequence of non-trivial eigenfunctions corresponding to isolated eigenvalues located strictly above the continuous spectrum. Since the existence of these eigenvalues is not generally guaranteed due to the potential collapse of the spectral gap, we establish explicit sufficient geometric conditions linking the domain's size and geometry with the variance of the dispersal kernel that assure the emergence of the principal eigenvalue. Subsequent eigenpairs are constructed inductively via finite-dimensional Galerkin approximations, utilizing a decomposition of the operator into compact and multiplicative part, we prove the strong convergence of the minimizing sequences.
\end{abstract}

\begin{keywords}
    \: non-local dispersal; continuous spectrum; discrete spectrum; spectral gap; bounded domain
\end{keywords}

\newpage

\section{Introduction}

In this work, we study the non-local dispersal operator $\mathcal{L}$, which serves as a non-local analogue of the classical Laplacian with Neumann boundary conditions. It is defined by the formula
\begin{equation}
    \label{EQ:NonLocalNeumann}
    \mathcal{L}v(x) = \int_{\Omega} J(x-y) (v(y) - v(x)) \, dy \quad x \in \Omega.
\end{equation}
The main objective of this work is to construct pairs $(\beta, v) \in \mathbb{R}\times L^2(\Omega)$ solving the non-local eigenvalue problem given below
\begin{equation}
    \label{eq:nonlocal_eigenvalue_problem}
    \mathcal{L} v = \beta v.
\end{equation}
We aim to achieve this goal by means of the calculus of variations. In particular we seek the solutions of the following optimization problem

\begin{equation}
    \beta = \sup_{v \in L^2(\Omega)} \langle \mathcal{L} v, v\rangle.
\end{equation}

We assume the following properties of the integral kernel $J$.

\begin{asm} 
    \label{asm:KernelProperties}
    The integral kernel $J \in C^0(\mathbb{R}^n)$ satisfies
    \begin{enumerate}[label=(\textit{J\arabic*})]
        \item \textbf{Positivity:} $J(z) \geq 0$ for all $z \in \mathbb{R}^n$, and $J(0) > 0$.
        \item \textbf{Symmetry:} $J(z) = J(-z)$ for all $z \in \mathbb{R}^n$.
        \item \textbf{Normalization:} $\int_{\mathbb{R}^n} J(z) \, dz = 1$.
        \item \textbf{Finite Second Moment:} $\int_{\mathbb{R}^n} J(z)|z|^2 \, dz < \infty$.
    \end{enumerate}
\end{asm}

Furthermore, we impose one of the supplementary assumptions on the domain $\Omega$. The first one is synthetic, but it allows us to demonstrate Example \ref{ex:generalized_exponential}.

\begin{asm}
    \label{asm:DomainProperties_1}
    The domain $\Omega \subset \mathbb{R}^n$ is open and bounded. There exist $\Omega_1, \Omega_2 \subset \Omega$ such that $\Omega_1 \cap \Omega_2 = \emptyset$, $\Omega_1 \cup \Omega_2 = \Omega$ and the following inequality holds true
    \begin{equation}\label{eq:sufficient_cond}
        \frac{4}{|\Omega|} \iint_{\Omega_1 \times \Omega_2} J(x-y) \, dx dy < \min_{x \in \overline{\Omega}} \int_\Omega J(x-y) \, dy.
    \end{equation}
\end{asm}
Conclusions from Example \ref{ex:generalized_exponential} brought us to the more useful condition, needed to prove existence of the first non-trivial eigenfunction.

\begin{asm}
    \label{asm:DomainProperties_2}
    The domain $\Omega \subset \mathbb{R}^n$ is open and bounded. We assume that $\Omega$ is large enough so the following inequality holds true
    \begin{equation} \label{eq:variance_condition}
        \frac{1}{|\Omega|^{2/n}}\frac{m_2(J)}{2 \, I_{max}} < \min_{x \in \overline{\Omega}} \int_\Omega J(x-y) \, dy,
    \end{equation}
    where we denoted the maximal moment of inertia by $I_{max} = \max_{1\leq i \leq n} \int_{\Omega_0} x_i^2 \:dx$ and a reference domain $\Omega_0 = \frac{1}{|\Omega|^{1/n}}\Omega$.
\end{asm}
To produce further eigenpairs, we no longer have a nice expression consisting of maximal moment of inertia. Instead of that, we state the criterion with an implicit, geometric constant.

\begin{asm}
    \label{asm:DomainProperties_3}
    The domain $\Omega \subset \mathbb{R}^n$ is open and bounded. Let $k \geq 1$ be a given integer. We assume that $\Omega$ is large enough so the following inequality holds true
    \begin{equation} \label{eq:variance_condition}
        C(k,n,\Omega_0)\frac{m_2(J)}{|\Omega|^{2/n}} < \min_{x \in \overline{\Omega}} \int_\Omega J(x-y) \, dy,
    \end{equation}
    where we denoted the reference domain $\Omega_0 = \frac{1}{|\Omega|^{1/n}}\Omega$ and a fixed geometric constant $C(k, n, \Omega_0) > 0$.
\end{asm}

\paragraph{Literature review}

A central challenge in the study of non-local dispersal is the lack of compactness, which fundamentally changes the spectral structure. The implications of this property for nonautonomous, periodic, and random environments were extensively investigated by Shen, Vickers, and Hutson \cite{ShenVickers2007, HutsonShenVickers2008}. Their work established specific conditions under which a principal eigenvalue is guaranteed to exist, and demonstrated that temporal fluctuations strictly increase the principal Lyapunov exponent, yielding the ecological conclusion that heterogeneous environments mathematically facilitate species invasion.

Foundational efforts to understand the eigenvalue problem for such operators were made by Coville, Rossi, and their collaborators (see e.g., \cite{ChasseigneChavesRossi2006, Coville2010, GarciaMelianRossi2009}), as well as comprehensively detailed in the monograph by Andreu-Vaillo et al. \cite{AndreuVaillo2010}. This line of research primarily focused on establishing maximum principles, exploring the asymptotic behavior of solutions, and analyzing the criteria for the existence of principal eigenvalues, often highlighting the fundamental differences between local and non-local operators under various boundary conditions. We highlight the following Theorems that are crucial to this work
\begin{thm}[Principal eigenvalue of the Neumann type dispersal]
    \label{thm:beta1_bounds}
    Let $\beta_1$ be the first non-trivial eigenvalue of $\mathcal{L}_N$. This value is well defined and satisfies the following bounds
    \begin{equation}
        \max_{x\in\overline{\Omega}} b(x) \leq \beta_1 < 0.
    \end{equation}
\end{thm}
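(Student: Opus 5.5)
The plan is to work with the splitting $\mathcal{L}_N = K + b$, where
$$Kv(x)=\int_\Omega J(x-y)v(y)\,dy,\qquad b(x) = -\int_\Omega J(x-y)\,dy .$$
I take $\beta_1$ to mean the top of the spectrum of $\mathcal{L}_N$ restricted to $\{1\}^\perp$, that is,
$$\beta_1=\sup\{\langle \mathcal{L}_N v,v\rangle : \|v\|_{L^2}=1,\ \textstyle\int_\Omega v=0\}.$$
This is well defined because $\mathcal{L}_N$ is bounded and self-adjoint by (J2), $\mathcal{L}_N 1 = 0$, and hence $\{1\}^\perp$ is invariant. Two structural facts will be used throughout. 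First, $J$ is continuous and $\Omega$ is bounded, so the kernel $J(x-y)$ is bounded on $\Omega\times\Omega$ and $K$ is Hilbert--Schmidt, hence compact. Second, Weyl's theorem then gives $\sigma_{ess}(\mathcal{L}_N)=\sigma_{ess}(b)=b(\overline\Omega)$, and the same holds on $\{1\}^\perp$, since compressing to a codimension-one subspace is a finite-rank perturbation.

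For the lower bound $\max_{\overline\Omega} b\le\beta_1$, I will use an explicit test sequence rather than abstract spectral theory.

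1. Pick $x_0\in\overline\Omega$ with $b(x_0)=\max b$ ($b$ is continuous on $\overline\Omega$). Take points $x_\varepsilon\in\Omega$ with $B_\varepsilon=B(x_\varepsilon,\varepsilon)\subset\Omega$ and $x_\varepsilon\to x_0$.
2. Set $u_\varepsilon=|B_\varepsilon|^{-1/2}\mathds{1}_{B_\varepsilon}$ and $v_\varepsilon=u_\varepsilon-\frac{1}{|\Omega|}\int_\Omega u_\varepsilon$. The correction has $L^2$ norm $|B_\varepsilon|^{1/2}|\Omega|^{-1/2}\to0$, so $v_\varepsilon\perp 1$ and $\|v_\varepsilon\|\to1$.
3. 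Since $u_\varepsilon\rightharpoonup 0$ weakly and $K$ is compact, $\langle Kv_\varepsilon,v_\varepsilon\rangle\to0$.
4. By continuity of $b$, $\langle b v_\varepsilon,v_\varepsilon\rangle\to b(x_0)$.

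Normalizing $v_\varepsilon$ and letting $\varepsilon\to0$ then gives $\beta_1\ge\max b$.

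For the strict upper bound $\beta_1<0$, I start from the symmetric identity, which follows from (J2):
$$\langle\mathcal{L}_N v,v\rangle=-\tfrac12\iint_{\Omega\times\Omega}J(x-y)(v(x)-v(y))^2\,dx\,dy\le 0,$$
so $\beta_1\le 0$. The strict inequality is the main obstacle, because the supremum need not be attained. Here the spectral gap does the work.
- By (J1) and continuity, $J>0$ on a ball $B(0,r)$. Since $\Omega$ is open, every $x\in\overline\Omega$ has $|\Omega\cap B(x,r)|>0$, so $b<0$ on $\overline\Omega$ and $\max b<0$.
- Hence $0\notin\sigma_{ess}$ on $\{1\}^\perp$. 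If $\beta_1=0$, then $0$ lies in the spectrum but outside the essential spectrum, so it is an isolated eigenvalue with an eigenfunction $v\perp1$, $v\neq0$.
- For this $v$ the quadratic form vanishes, so $(v(x)-v(y))^2=0$ for a.e. pair with $|x-y|<r$. Thus $v$ is locally constant, hence constant on $\Omega$, assuming $\Omega$ is connected (a domain). Combined with $v\perp1$ this forces $v=0$, a contradiction.

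Therefore $\beta_1<0$. If $\Omega$ were allowed to be disconnected, I would need the components to lie at mutual distance less than the support radius of $J$, and I would state that as the required hypothesis.
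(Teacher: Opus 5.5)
Your proof is correct for connected $\Omega$. It differs from the paper only in that the paper gives no argument of its own here: it refers both inequalities to Proposition 3.4 and Lemma 3.5 of Andreu-Vaillo et al.

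You make the result self-contained. The lower bound is a Weyl-sequence computation with normalized bumps concentrating at a maximizer of $b$. The strict upper bound combines three ingredients:
\begin{itemize}
\item Weyl's theorem, $\sigma_{ess}=b(\overline\Omega)$, which is the fact the paper separately imports as Theorem~\ref{thm:contninous_spectrum};
\item the observation $\max b<0$, which means $\beta_1=0$ would have to be an isolated eigenvalue on $\{1\}^\perp$;
\item the Dirichlet-form identity, which rules such an eigenvalue out.
\end{itemize}

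What your route buys is transparency about the hypotheses. You correctly read $b(x)=-\int_\Omega J(x-y)\,dy$. This is the only sign under which the statement makes sense, since elsewhere the paper uses $b$ for $+\int_\Omega J(x-y)\,dy$.

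You also show that connectedness is indispensable, even though the paper's assumptions only ask for $\Omega$ open and bounded. Take $J$ compactly supported and two components farther apart than its support. A mean-zero function that is constant on each component then has zero energy, so $\beta_1=0$. The sharp substitute for connectedness is $J$-connectedness: $\Omega$ cannot be split into two sets $A,B$ of positive measure with $J(x-y)=0$ for a.e. $(x,y)\in A\times B$. Your pairwise-distance condition is not quite this when $J$ vanishes on an annulus.

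You can also avoid Weyl's theorem by using the compact/multiplicative splitting from Lemma~\ref{lem:convergence_principal_eigenfunction}. Suppose $\beta_1=0$ and take a maximizing sequence $u_n\rightharpoonup u$. Compactness of $K$ gives $\langle Ku,u\rangle=\lim\int_\Omega(-b)u_n^2\ge-\max b>0$, so $u\neq 0$. Weak lower semicontinuity of $\int_\Omega(-b)u^2$ gives $\langle\mathcal{L}_N u,u\rangle\ge 0$, so $u$ is constant. Since $u$ also has zero mean, this is a contradiction.
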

\begin{proof}
    For the detailed proofs of lower and upper bounds of $\beta_1$ we refer the reader to Proposition 3.4 and Lemma 3.5 in \cite{AndreuVaillo2010}.
\end{proof}
Note, that even though Theorem \ref{thm:beta1_bounds} states that the candidate for eigenvalue $\beta_1$ is well defined, it is not generally known whether there exists a corresponding eigenfunction. In fact, it might be that such a function does not exists, see \cite{Coville2010, ShenZhang2010} for examples. Situation in the hostile environment case (Dirichlet) is a bit more optimistic.
\begin{thm}[Principal eigenvalue of the Dirichlet type dispersal]
    \label{thm:dirichlet_principal_eigenvalue}
    Let $\mathcal{L}_D$ be the non-local Dirichlet operator. There exists a principal eigenvalue $\beta_1$ associated with $\mathcal{L}_D$, and a corresponding non-negative eigenfunction $v_0 \ge 0$. Moreover, $v_0$ is strictly positive in $\Omega$ (with a positive continuous extension to $\overline{\Omega}$) and vanishes strictly outside $\Omega$, meaning the boundary value is not taken in the usual classical sense and a discontinuity occurs on $\partial\Omega$.
\end{thm}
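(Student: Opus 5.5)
The plan is to prove Theorem \ref{thm:dirichlet_principal_eigenvalue} through a Krein--Rutman/Perron--Frobenius argument on $C(\overline{\Omega})$, together with the variational characterization and a shift of the operator. The non-local Dirichlet operator is
\[
\mathcal{L}_D v(x)=\int_\Omega J(x-y)v(y)\,dy - v(x),\qquad x\in\Omega,
\]
with $v\equiv 0$ outside $\Omega$. Write $\mathcal{L}_D=K-I$, where $Kv(x)=\int_\Omega J(x-y)v(y)\,dy$. Since $J$ is continuous and $\Omega$ is bounded, $K$ is compact on $C(\overline{\Omega})$ by Arzel\`a--Ascoli, and likewise on $L^2(\Omega)$ as a Hilbert--Schmidt operator. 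By (J2), $K$ is self-adjoint on $L^2$. This is the key structural difference from the Neumann case: there the multiplicative term $-b(x)v$ with $b(x)=-\int_\Omega J(x-y)\,dy$ is non-constant and destroys compactness, while here the multiplicative part is exactly $-I$. Eigenpairs of $\mathcal{L}_D$ are therefore eigenpairs of $K$ shifted by $-1$.

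\textbf{Existence.} Set $\beta_1=\sup_{\|v\|_{L^2}=1}\langle \mathcal{L}_D v,v\rangle = \lambda_1-1$, where $\lambda_1=\sup_{\|v\|=1}\langle Kv,v\rangle$. First show $\lambda_1>0$. Because $J(0)>0$ and $J$ is continuous, $J>0$ on a ball $B_\delta$, so $\langle K\chi_B,\chi_B\rangle>0$ for any small ball $B\subset\Omega$. Next, take a maximizing sequence, which is bounded in $L^2$, and pass to a weakly convergent subsequence. Compactness of $K$ gives $\langle Kv_k,v_k\rangle\to\langle Kv,v\rangle$. Since $\lambda_1>0$, the weak limit satisfies $\|v\|\le 1$ and $\langle Kv,v\rangle=\lambda_1>0$, which forces $\|v\|=1$ and so strong convergence. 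The Euler--Lagrange equation then yields $Kv_0=\lambda_1 v_0$, that is, $\mathcal{L}_D v_0=\beta_1 v_0$.

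\textbf{Sign.} Since $J\ge 0$, we have $\langle K|v|,|v|\rangle\ge\langle Kv,v\rangle$, so $|v_0|$ is also a maximizer and hence also an eigenfunction. We may therefore take $v_0\ge 0$.

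\textbf{Regularity and strict positivity.} Rewrite the eigen-equation as $v_0=\lambda_1^{-1}Kv_0$. The right-hand side is continuous on $\overline{\Omega}$ because $J$ is uniformly continuous on compact sets and $v_0\in L^1(\Omega)$, so $v_0$ has a continuous extension to $\overline{\Omega}$. For strict positivity, suppose $v_0(x_0)=0$ at some $x_0\in\overline{\Omega}$. Then $\int_\Omega J(x_0-y)v_0(y)\,dy=0$, so $v_0=0$ a.e. on $B_\delta(x_0)\cap\Omega$. Iterating along chains of overlapping balls of radius $\delta$ covers $\Omega$, and we conclude $v_0\equiv 0$, a contradiction.

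This connectivity step is where I expect the main obstacle. It needs $\Omega$ to be connected, or at least connected at scale $\delta$ via $J$. If $\Omega$ is disconnected with components farther apart than the support of $J$, positivity holds only on one component, and I would state the result componentwise. With positivity in hand, the positive extension to $\overline{\Omega}$ follows from continuity of $Kv_0$ together with compactness of $\overline{\Omega}$.

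\textbf{Behaviour at $\partial\Omega$.} By definition of the Dirichlet problem $v_0=0$ on $\mathbb{R}^n\setminus\Omega$, while $\min_{\overline{\Omega}}v_0>0$. So $v_0$ jumps across $\partial\Omega$, and the boundary value is not attained in the classical sense.

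\textbf{Bounds on $\beta_1$.} Testing with the positive eigenfunction gives $\lambda_1<\sup_x\int_\Omega J(x-y)\,dy\le 1$, hence $\beta_1<0$. Finally, one should record that $\beta_1$ lies strictly above $\sup\sigma_{ess}(\mathcal{L}_D)=-1$: the essential spectrum is $\{-1\}$ because $K$ is compact, and $\lambda_1>0$ places $\beta_1$ strictly above it. This justifies calling $\beta_1$ an isolated principal eigenvalue.
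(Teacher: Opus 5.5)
The paper does not prove this statement; it only cites Proposition 2.3 and Remark 2.4 of Andreu-Vaillo et al. Your proposal is therefore a self-contained replacement for that citation, and it is correct. Your steps are:
\begin{itemize}
\item split $\mathcal{L}_D=K-I$ with $K$ compact and self-adjoint;
\item show that $\lambda_1=\sup_{\|v\|=1}\langle Kv,v\rangle>0$ is attained, where positivity of $\lambda_1$ is exactly what stops the weak limit from vanishing;
\item replace the maximizer by $|v_0|$;
\item obtain continuity on $\overline{\Omega}$ from $v_0=\lambda_1^{-1}Kv_0$;
\item propagate zeros through balls on which $J>0$.
\end{itemize}

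Compared with a Krein--Rutman argument in $C(\overline{\Omega})$, which you mention but never need, your route uses the symmetry (J2) to get a variational characterization and avoids cone theory. What it does not give is simplicity of the principal eigenvalue, which the statement does not claim.

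The connectivity issue you isolate is real, and it is a gap in the statement rather than in your proof. The cited result is for a bounded domain, i.e.\ a connected set. If $J$ has compact support and $\Omega$ consists of two intervals of different lengths farther apart than the range of $J$, then $K$ decouples. The principal eigenfunction then vanishes identically on the shorter interval, so strict positivity genuinely requires $\Omega$ to be connected, or at least linked at the scale where $J>0$.

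There is one slip, in your paragraph on bounds, which goes beyond the statement. The strict inequality $\lambda_1<\sup_x\int_\Omega J(x-y)\,dy$ fails in general. Take $\Omega=(0,L)$ and a normalized $J$ equal to a constant $c$ on $[-L,L]$, with a continuous tail $0\le J\le c$ outside. Then $b(x):=\int_\Omega J(x-y)\,dy\le cL$ for every $x$, with $b\equiv cL$ on $\overline{\Omega}$. Testing with constants gives $\lambda_1\ge cL$, and your maximum-point argument gives $\lambda_1\le\sup_x b(x)=cL$, so equality holds.

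The conclusion $\beta_1<0$ still holds. Symmetry of $J$ gives
\[
\langle Kv_0,v_0\rangle\le\int_\Omega b\,v_0^2\,dx .
\]
At a point $\hat{x}\in\overline{\Omega}$ where $x_1$ is maximal, $b(\hat{x})\le\int_{\{z_1\ge 0\}}J(z)\,dz=\tfrac12$. By continuity, $b<1$ on a set of positive measure in $\Omega$. Since $v_0>0$ there, $\int_\Omega b\,v_0^2\,dx<\int_\Omega v_0^2\,dx$, and hence $\lambda_1<1$.
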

\begin{proof}
    For the detailed proof of the existence of the principal eigenvalue and the exact spatial properties of the corresponding non-negative eigenfunction $v_0$, we refer the reader to Section 2.1.1, Proposition 2.3, and Remark 2.4 in \cite{AndreuVaillo2010}.
\end{proof}
In more recent work, Foss, Radu, and Wright \cite{FossRaduWright2018} established a robust framework for minimizing non-local energy functionals. Their work demonstrates how to derive non-local Euler-Lagrange equations and prove the existence and regularity of minimizers using joint convexity and non-local Poincar\'e inequalities, notably without relying on the restrictive growth conditions required in classical Sobolev spaces.

Subsequent research focused on generalized, variational characterizations of the principal eigenvalue. Berestycki, Coville, and Vo \cite{BerestyckiCovilleVo2016} established the equivalence of various spectral definitions and analyzed kernel scaling asymptotics. Li, Coville, and Wang \cite{LiCovilleWang2017} further utilized the strong maximum principle to derive exact max-min characterizations of the principal eigenvalue, rigorously proving its algebraic simplicity and isolation. Recently, Wang and Zhang \cite{WangZhang2026} extended this framework to cooperative non-local systems. By approximating the generalized principal eigenvalue through monotonic control matrices, they demonstrated that this spectral bound strictly dictates the threshold dynamics and global stability of the coupled systems.

Parallel to the development of variational formulations, extensive research has been devoted to the parametric dependence of the principal spectrum point. Shen and Xie \cite{ShenXie2015} systematically analyzed the impact of spatial inhomogeneity, dispersal rate, and dispersal distance across Dirichlet, Neumann, and periodic boundary conditions, establishing rigorous asymptotic limits and persistence criteria. Bao and Shen \cite{BaoShen2016} subsequently extended this framework to time-periodic cooperative systems, deriving explicit geometric conditions, such as the vanishing inhomogeneity condition and small dispersal distance limits, that guarantee the existence and algebraic simplicity of the principal eigenvalue. More recently, Vo \cite{Vo2022} addressed the distinct analytical challenges posed by time-periodic non-local Neumann operators. Because the principal eigenvalue in the non-local Neumann setting lacks domain monotonicity, Vo developed new techniques to determine its precise asymptotic limits with respect to varying dispersal scales and established the exact spectral criteria required for the validity of the associated maximum principle.

\paragraph{Main result}

We have delivered two crucial results for the spectral theory of non-local dispersal operators. The first one established existence of the spectral gap

\begin{thm}[Spectral Gap Theorem]
    \label{thm:spectral_gap_theorem}
    Assume that the kernel $J$ satisfies Assumption \ref{asm:KernelProperties} and that domain $\Omega$ satisfies Assumption \ref{asm:DomainProperties_3} with fixed $k \geq 1$. Then the following inequality holds true
    \begin{equation}
        \sup \sigma_c < \beta_k < \ldots < \beta_0.
    \end{equation}
\end{thm}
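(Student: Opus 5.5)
Write $\mathcal{L}v = \mathcal{K}v - a v$, where $\mathcal{K}v(x)=\int_\Omega J(x-y)v(y)\,dy$ is compact and self-adjoint on $L^2(\Omega)$ (its kernel is continuous on $\overline{\Omega}\times\overline{\Omega}$, hence Hilbert--Schmidt). The multiplier is $a(x)=\int_\Omega J(x-y)\,dy$, so $b=-a$. By Weyl's theorem, $\sigma_{ess}(\mathcal{L})=\sigma_{ess}(-a)$, which is the range of $-a$ over $\overline{\Omega}$. In particular $\sup\sigma_c=\max_{\overline\Omega}(-a)=-\min_{x\in\overline\Omega}\int_\Omega J(x-y)\,dy$.

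The plan is to define $\beta_0=0$, with the constants as eigenfunctions, and then define successively
\[
\beta_j=\sup\{\langle\mathcal{L}v,v\rangle:\ \|v\|_2=1,\ v\perp v_0,\dots,v_{j-1}\},
\]
and show that $\beta_k>\sup\sigma_c$. For a bounded self-adjoint operator, Courant--Fischer gives the following dichotomy. Either $\beta_j$ lies strictly above $\sup\sigma_{ess}$, in which case it is an isolated eigenvalue of finite multiplicity attained by some $v_j$. Or $\beta_j=\sup\sigma_{ess}$, in which case all further values coincide. So it suffices to show that the $(k+1)$-th min–max value exceeds $\sup\sigma_c$. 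For that it is enough to exhibit a $(k+1)$-dimensional subspace $V_k\subset L^2(\Omega)$ on which $\langle\mathcal{L}v,v\rangle>-\min a\,\|v\|^2$. The strict ordering $\beta_k<\dots<\beta_0$ should then follow by listing the eigenvalues without multiplicity. If the paper insists on strictness with multiplicity, I would instead use the Galerkin construction mentioned in the abstract. There, the compact part $\mathcal{K}$ gives strong convergence of maximizing sequences as soon as the limiting value exceeds $\max(-a)$: weak limits lose only mass on which the quadratic form is at most $\max(-a)$.

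For the test space I would take polynomials of degree at most some $d$, with $\dim\ge k+1$. Concretely, $V_k=\mathrm{span}\{1,p_1,\dots,p_k\}$ with $p_i$ fixed low-degree polynomials in rescaled coordinates $x/|\Omega|^{1/n}$. For $v\in V_k$,
\[
-\langle\mathcal{L}v,v\rangle=\tfrac12\iint_{\Omega\times\Omega}J(x-y)(v(x)-v(y))^2\,dx\,dy\le\tfrac12\|\nabla v\|_\infty^2\, m_2(J)\,|\Omega|.
\]
This follows from the mean value inequality and $\int_\Omega J(x-y)|x-y|^2\,dy\le m_2(J)$. In the case $k=1$ with $v=x_i$ (after centering), this gives exactly the $I_{max}$ bound of Assumption \ref{asm:DomainProperties_2}:
\[
\frac{-\langle\mathcal{L}v,v\rangle}{\|v\|^2}\le\frac{m_2(J)|\Omega|}{2\int_\Omega x_i^2}=\frac{m_2(J)}{2I_{max}|\Omega|^{2/n}}.
\]
For general $k$, scaling shows the Rayleigh quotient on the unit sphere of $V_k$ is at most $C(k,n,\Omega_0)\,m_2(J)/|\Omega|^{2/n}$. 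Here
\[
C=\sup_{v\in V_k(\Omega_0)}\frac{\|\nabla v\|_{L^\infty(\Omega_0)}^2}{2\|v\|_{L^2(\Omega_0)}^2}.
\]
This is finite because $V_k(\Omega_0)$ is finite dimensional and $\|\cdot\|_{L^2(\Omega_0)}$ is a norm on it; the constants are harmless since $\nabla 1=0$. One could sharpen this using the pointwise bound $\int J(z)(z\cdot\nabla v)^2$ rather than the crude $\|\nabla v\|_\infty$, but it is not needed. Assumption \ref{asm:DomainProperties_3} then yields a quotient strictly greater than $-\min a=\sup\sigma_c$ on all of $V_k$. By min–max, at least $k+1$ spectral values, counted with multiplicity, lie strictly above $\sup\sigma_c$, and these must be isolated eigenvalues.

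The main obstacle is the precise identification $\sup\sigma_c=-\min_{\overline\Omega}a$. This requires compactness of $\mathcal{K}$ together with the fact that the essential spectrum of the multiplication operator is the essential range of $-a$, which equals the full range because $a$ is continuous and $\Omega$ is open. A second obstacle is reconciling min–max counting with multiplicity with the strict chain in the statement. I would handle the latter by defining $\beta_j$ as distinct eigenvalues and noting that the count of distinct values can only be at most $k+1$. If needed, I would choose $C(k,n,\Omega_0)$ large enough to cover the larger $(k+1)$-fold multiplicity-counted space, so that $k+1$ distinct eigenvalues are guaranteed.
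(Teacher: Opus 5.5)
Your main argument is the paper's. Its proof runs through Lemma \ref{lem:large_domain_gap_k}, which does the following:
\begin{itemize}
\item it fixes a $(k+1)$-dimensional space of $C^1$ functions on $\Omega_0$;
\item it uses a uniform Lipschitz bound $L_k$ on the unit sphere of that space;
\item it rescales to get $\langle\mathcal{L}v,v\rangle\ge -\tfrac{L_k}{2}\,m_2(J)\,|\Omega|^{-2/n}$, so $C(k,n,\Omega_0)=L_k/2$, which is your constant;
\item it compares this with $\sup\sigma_c=-\min_{\overline\Omega}\int_\Omega J(x-y)\,dy$.
\end{itemize}
The one structural difference is how eigenvalues above $\sup\sigma_c$ are obtained. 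The paper finds a test function in $A_k$ by a dimension count against $v_0,\dots,v_{k-1}$, which presupposes that these eigenfunctions already exist (they come from the Galerkin induction). Your Courant--Fischer dichotomy yields the eigenvalues above $\sup\sigma_c$ directly, which is cleaner. Your Weyl argument also proves the identification of $\sigma_c$ that the paper only cites. One small repair: for non-convex $\Omega_0$ the mean value inequality needs $\sup|\nabla v|$ over the convex hull of $\Omega_0$, not over $\Omega_0$. This is harmless for polynomials.

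The gap is the strict chain. Both your min--max argument and the paper's give only $\sup\sigma_c<\beta_k\le\dots\le\beta_1\le\beta_0=0$. The paper gets the non-strict inequalities from the nesting $A_k\subset\dots\subset A_1$ and makes only $\beta_1<\beta_0$ strict. Your fallback (relabel by distinct eigenvalues, or enlarge $C(k,n,\Omega_0)$ to cover a bigger multiplicity-counted space) does not work:
\begin{itemize}
\item min--max counts with multiplicity;
\item eigenvalues above the essential spectrum have finite but uncontrolled multiplicity, and nothing in the geometric condition bounds it;
\item so no choice of test space forces $k+1$ distinct values.
\end{itemize}
On symmetric configurations, such as a square with a radial kernel, one should expect $\beta_1=\beta_2$. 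Strictness among $\beta_1,\dots,\beta_k$ is therefore not provable in this generality, and you should state the chain with $\le$ there.

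You also leave $\beta_1<\beta_0$ unjustified. In your setting it follows because $\beta_1>\sup\sigma_c$ is attained by some $v_1\perp 1$. Moreover, $\langle\mathcal{L}v,v\rangle=-\tfrac12\iint J(x-y)(v(x)-v(y))^2\,dx\,dy=0$ forces $v$ to be locally constant, since $J>0$ near $0$. Hence $v$ is constant on connected $\Omega$, so $\beta_1=0$ is impossible for $v_1\perp 1$. The paper instead imports $\beta_1<0$ from Andreu-Vaillo et al.
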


\begin{proof}
    The proof relies on the scaling argument established in Lemma \ref{lem:large_domain_gap_k}. By Assumption \ref{asm:DomainProperties_3}, the domain $\Omega$ can be viewed as a scaled version of the reference domain $\Omega_0 = \frac{1}{|\Omega|^{1/n}}\Omega$ with the scaling factor $\lambda = |\Omega|^{1/n}$. 
    
    Following the construction in Lemma \ref{lem:large_domain_gap_k}, we can find a $(k+1)$-dimensional subspace $W \subset C^1(\overline{\Omega})$ and a uniform Lipschitz constant $L_k$ such that for a suitably chosen test function $v \in W \cap A_k$, the energy functional is strictly bounded from below by the variance of the kernel. Specifically, using the established scaling of the squared differences \eqref{eq:lipschitz_scaling}, the energy satisfies:
    \begin{equation*}
        \beta_k = \sup_{v \in A_k} \langle \mathcal{L}v, v \rangle \geq - C(k,n,\Omega_0) \frac{m_2(J)}{|\Omega|^{2/n}},
    \end{equation*}
    where $C(k,n,\Omega_0) = L_k / 2$. 
    
    Applying the strict geometric condition \eqref{eq:variance_condition} from Assumption \ref{asm:DomainProperties_3}, we obtain:
    \begin{equation*}
        - C(k,n,\Omega_0) \frac{m_2(J)}{|\Omega|^{2/n}} > -\min_{x \in \overline{\Omega}} \int_\Omega J(x-y) \, dy.
    \end{equation*}
    By Theorem \ref{thm:contninous_spectrum}, the right-hand side is exactly equal to $\sup \sigma_c$. Therefore, $\beta_k > \sup \sigma_c$, which guarantees the existence of the $k$-th spectral gap. 

    Furthermore, since the sets of admissible functions are strictly nested, i.e., $A_k \subset A_{k-1} \subset \dots \subset A_1$, the suprema naturally satisfy the monotonic ordering $\beta_k \leq \beta_{k-1} \leq \dots \leq \beta_1$. Finally, Theorem \ref{thm:beta1_bounds} ensures that $\beta_1 < 0 = \beta_0$, which concludes the proof.
\end{proof}

Working in the framework of Assumption \ref{asm:KernelProperties} together with Assumption \ref{asm:DomainProperties_1}, \ref{asm:DomainProperties_2} or \ref{asm:DomainProperties_3}, we obtained the following spectral theorem for the non-local dispersal operators

\begin{thm}[Non-local Spectral Theorem]
    \label{thm:spectral_theorem}
    Assume that the kernel $J$ satisfies Assumption \ref{asm:KernelProperties} and that domain $\Omega$ satisfies either Assumption \ref{asm:DomainProperties_1}, \ref{asm:DomainProperties_2} or \ref{asm:DomainProperties_3}. Then there exists a finite or infinite sequence of pairs $(\beta_k, v_k) \in \mathbb{R} \times L^2(\Omega)$ solving equation \eqref{eq:nonlocal_eigenvalue_problem}. Moreover, the sequence of eigenvalues $\{\beta_k\}_{k = 0}^M$ satisfies
    \begin{equation}
        -\min_{x \in \overline{\Omega}} \int_\Omega J(x-y) \, dy < \beta_M < \ldots < \beta_0 = 0,
    \end{equation}
    where $M \geq 1$.
\end{thm}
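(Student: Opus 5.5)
The plan is to write $\mathcal{L} = K - b$, where
\[
Kv(x) = \int_\Omega J(x-y)v(y)\,dy, \qquad b(x) = \int_\Omega J(x-y)\,dy .
\]
Since $J$ is continuous and $\Omega$ is bounded, $K$ is a compact self-adjoint Hilbert--Schmidt operator on $L^2(\Omega)$, and $b$ is continuous on $\overline{\Omega}$. By Weyl's theorem on compact perturbations, the essential spectrum of $\mathcal{L}$ equals that of multiplication by $-b$, namely the range of $-b$ (this is presumably the content of Theorem \ref{thm:contninous_spectrum}). In particular $\sup \sigma_c = -\min_{\overline{\Omega}} b$.

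The eigenpairs are then constructed inductively by constrained maximization. Take $\beta_0 = 0$ with $v_0 \equiv$ const, since $\mathcal{L}1 = 0$. Having found orthonormal $v_0,\dots,v_{k-1}$, set
\[
A_k = \{ v \in L^2(\Omega) : \|v\|_2 = 1,\ \langle v, v_j\rangle = 0 \text{ for } j<k \}
\]
and $\beta_k = \sup_{A_k}\langle \mathcal{L}v,v\rangle$. The symmetric form
\[
\langle \mathcal{L} v,v\rangle = -\tfrac12\iint J(x-y)\,(v(x)-v(y))^2
\]
shows $\beta_k \le 0$. The first step is to show $\beta_k > -\min b$. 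Under Assumption \ref{asm:DomainProperties_1} I would test with $v = c_1\mathds{1}_{\Omega_1} + c_2\mathds{1}_{\Omega_2}$ orthogonal to constants. Under Assumption \ref{asm:DomainProperties_2} I would test with a centered linear function $v = x_i - \bar{x}_i$ and use $(v(x)-v(y))^2 \le |x-y|^2$, which gives the bound $-m_2(J)/(2\|v\|^2)$ and hence the moment-of-inertia expression after rescaling to $\Omega_0$. Under Assumption \ref{asm:DomainProperties_3} I would use the argument of Theorem \ref{thm:spectral_gap_theorem}: a $(k+1)$-dimensional space of Lipschitz functions always contains an element of $A_k$, and that element has energy $\ge -C\,m_2(J)/|\Omega|^{2/n}$. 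In every case this yields the gap $\beta_k > \sup\sigma_c$.

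The main step, and the one I expect to be the obstacle, is attainment of the supremum. Take a maximizing sequence $u_m \in A_k$; it has a subsequence converging weakly to some $u$. Then $\langle K u_m,u_m\rangle \to \langle Ku,u\rangle$ by compactness, while $\langle b u_m, u_m\rangle$ is only weakly lower semicontinuous. I would write $u_m = u + w_m$ with $w_m \rightharpoonup 0$. Then $\langle Kw_m,w_m\rangle \to 0$, and
\[
\langle \mathcal{L}u_m,u_m\rangle = \langle \mathcal{L}u,u\rangle - \langle b w_m,w_m\rangle + o(1) \le \langle \mathcal{L}u,u\rangle - (\min b)\,\|w_m\|^2 + o(1).
\]
Set $\theta = \lim\|w_m\|^2$, so that $\|u\|^2 = 1-\theta$. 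If $u \ne 0$, then $\langle \mathcal{L}u,u\rangle \le \beta_k\|u\|^2$, because $u$ remains orthogonal to $v_j$ by weak limits. This gives $\beta_k \le \beta_k(1-\theta) - \theta\min b$, that is $\theta(\beta_k + \min b) \le 0$. The gap $\beta_k + \min b > 0$ then forces $\theta = 0$. If $u = 0$, the same computation gives $\beta_k \le -\min b$, which contradicts the gap. Hence $u_m \to u$ strongly, $u \in A_k$, and $u$ attains $\beta_k$. Alternatively, one can run this through finite-dimensional Galerkin approximations, as the abstract suggests.

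To finish, the Lagrange multiplier rule on the sphere intersected with the orthogonality constraints gives $\mathcal{L}v_k = \beta_k v_k + \sum_{j<k}\mu_j v_j$. Self-adjointness and $\mathcal{L}v_j = \beta_j v_j$ then force $\mu_j = 0$, so $v_k$ is an eigenfunction. Strict ordering $\beta_k < \beta_{k-1}$ follows as well: if equality held, the maximum over $A_{k-1}$ would be attained on a two-dimensional space. I would resolve this by counting eigenvalues with multiplicity, or by invoking isolation and simplicity of $\beta_1$ from \cite{LiCovilleWang2017}, and for the later eigenvalues by passing to distinct values. The process continues as long as the gap condition holds. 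Only finitely many eigenvalues can lie above any level strictly exceeding $\sup\sigma_c$, since the spectrum outside $\sigma_{ess}$ is discrete, so the sequence is finite or accumulates only at $\sup\sigma_c$. Combined with Theorem \ref{thm:beta1_bounds}, which ensures $M \ge 1$, this gives the stated chain of inequalities.
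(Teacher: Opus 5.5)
Your core argument is correct, modulo the points in the next paragraph. It reaches the eigenpairs by a different route from the paper, although your gap constructions are the paper's (step function, centered linear function, a $(k+1)$-dimensional space of Lipschitz functions).

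\textbf{The attainment step.} The paper uses Galerkin truncations. It maximizes over $V_N\cap A_k$, which is a symmetric matrix eigenproblem, and shows $\beta_k^N\uparrow\beta_k$. It identifies the weak limit of the discrete maximizers as a weak eigenfunction by passing to the limit in the discrete equations. It then combines the energy identity, compactness of the integral part and the positive weight $\beta_k+b$ to obtain strong convergence. You instead apply the direct method to an arbitrary maximizing sequence, split off the weakly null part, and use only maximality of $\beta_k$ on the weak limit to get $\theta(\beta_k+\min b)\le 0$. The eigen-equation then comes from the Lagrange multiplier rule, with the multipliers removed by self-adjointness. The mechanism is the same in both: the compact part plus the gap prevents loss of mass. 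Your version has three advantages:
\begin{enumerate}
\item it needs no basis or kernel expansion;
\item it imposes orthogonality to $v_0,\dots,v_{k-1}$ exactly, whereas the paper's discrete constraint $c_0=\dots=c_k=0$ encodes orthogonality to $\varphi_0,\dots,\varphi_k$;
\item it shows explicitly why a constrained maximizer solves the full equation, which the paper's inductive step only checks against $\psi\in A_{k+1}$.
\end{enumerate}
The Galerkin route gives in return a computable approximation with monotonically convergent discrete eigenvalues.

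\textbf{Points to fix.} Three points need correcting; the paper tacitly shares the first two.
\begin{enumerate}
\item Under Assumption \ref{asm:DomainProperties_1}, orthogonality to constants and normalization force your step function to have energy $-\frac{|\Omega|}{|\Omega_1||\Omega_2|}\iint_{\Omega_1\times\Omega_2}J(x-y)\,dx\,dy$. This is strictly below $-\frac{4}{|\Omega|}\iint_{\Omega_1\times\Omega_2}J(x-y)\,dx\,dy$ unless $|\Omega_1|=|\Omega_2|$. The equal-measure hypothesis, used silently in Proposition \ref{prop:spectral_gap}, is indispensable. Without it the assumption holds for every domain (take $\Omega_1$ tiny). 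Yet for a kernel with $J\equiv c$ on $\Omega-\Omega$ one has $\mathcal{L}=-c|\Omega|$ on mean-zero functions, so $\beta_1=\sup\sigma_c$ and no gap exists.
\item Under Assumption \ref{asm:DomainProperties_2}, the linear test function yields the lower bound $-|\Omega|\,m_2(J)/(2\|v\|^2)$; you dropped the factor $|\Omega|$. After rescaling this is $-m_2(J)/(2|\Omega|^{2/n}I^c)$ with the barycentric moment $I^c=\int_{\Omega_0}(y_i-\bar y_i)^2\,dy$. The assumption is written with the larger uncentered moment $\int_{\Omega_0}y_i^2\,dy$, so it does not imply your inequality. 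It must be read with centered moments, as in Lemma \ref{lem:large_domain_gap}. The literal version holds for any domain translated far enough from the origin.
\item Strict ordering does not follow from the variational construction. The case $\beta_k=\beta_{k-1}$ just means a multiple eigenvalue, which symmetric domains with radial $J$ can produce. The simplicity result of Li, Coville and Wang concerns the principal eigenvalue, which here is $\beta_0=0$, not $\beta_1$. Keeping only the distinct values is the right fix. It is genuinely needed, since the paper's argument itself only gives $\beta_M\le\dots\le\beta_1<\beta_0$.
\end{enumerate}
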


\begin{proof}[Proof of Theorem \ref{thm:spectral_theorem}]
    We construct the sequence of eigenpairs by induction. For $k=0$, the existence of the trivial eigenpair $(\beta_0, v_0) = (0, |\Omega|^{-1/2})$ follows directly from the mass-conserving property of the non-local operator $\mathcal{L}$ with Neumann-type boundary conditions (Remark 2.1).

    For the principal non-trivial eigenvalue ($k=1$), Proposition \ref{prop:spectral_gap} (under Assumption \ref{asm:DomainProperties_1}) or the scaling arguments (under Assumptions \ref{asm:DomainProperties_2} and \ref{asm:DomainProperties_3}) guarantee the existence of a strictly positive spectral gap: $\sup \sigma_c < \beta_1$. By Lemma \ref{lem:convergence_principal_eigenvalue} and Lemma \ref{lem:convergence_principal_eigenfunction}, the sequence of finite-dimensional Galerkin approximations $(\beta_1^N, v_1^N)$ algebraically defined in Proposition \ref{prp:finite_dimensional_minimizer} converges strongly in $L^2(\Omega)$ to a valid eigenpair $(\beta_1, v_1) \in \mathbb{R} \times A_1$.

    Suppose that for some integer $k \geq 1$, there exist $k$ normalized, mutually orthogonal eigenfunctions $v_1, \dots, v_k$ corresponding to eigenvalues $\beta_k \leq \dots \leq \beta_1 < 0$. If the domain size and the kernel variance satisfy the criteria to maintain the spectral gap for the $(k+1)$-th mode, i.e.,
    \begin{equation*}
        \sup \sigma_c < \sup_{v \in A_{k+1}} \langle \mathcal{L}v, v \rangle =: \beta_{k+1},
    \end{equation*}
    then Lemma \ref{lem:inductive_step} rigorously ensures that the Galerkin approximations on the orthogonally restricted subspace $A_{k+1}$ converge strongly to a new eigenpair $(\beta_{k+1}, v_{k+1})$. 
    
    This inductive process can be continued as long as the spectral gap condition holds. Let $M \geq 1$ denote the number of such eigenvalues. Because the operator $\mathcal{L}$ is bounded, its discrete spectrum located strictly above the essential spectrum $\sigma_c$ can only consist of isolated eigenvalues of finite multiplicity. Therefore, the sequence of discrete eigenvalues $\{\beta_k\}_{k=0}^M$ is strictly bounded from below by the essential spectrum and satisfies:
    \begin{equation*}
        -\min_{x \in \overline{\Omega}} \int_\Omega J(x-y) \, dy = \sup \sigma_c < \beta_M \leq \dots \leq \beta_1 < \beta_0 = 0.
    \end{equation*}
    If the sequence is infinite ($M = \infty$), the general spectral theory of self-adjoint operators dictates that the eigenvalues must accumulate exactly at the upper bound of the essential spectrum, meaning $\lim_{k \to \infty} \beta_k = \sup \sigma_c$. This concludes the proof of the Non-local Spectral Theorem.
\end{proof}

To the author's best knowledge, this is the first, rigorous result establishing existence of eigenfunctions corresponding to non-trivial eigenvalues on a bounded domains in the Neumann-type case.

\section{Spectral gap}

We begin by recalling the fundamental mass conservation property of the non-local Neumann operator.

\begin{rem}
    For any integral kernel $J$ satisfying Assumption \ref{asm:KernelProperties}, the dispersal operator $\mathcal{L}$ defined in \eqref{EQ:NonLocalNeumann} admits a trivial principal eigenvalue $\beta_0 = 0$. The corresponding eigenfunction is given by the constant function 
    \begin{equation}
        v_0(x) = \frac{1}{\sqrt{|\Omega|}}.
    \end{equation}
\end{rem}

Unlike the classical Laplacian, whose spectrum on a bounded domain is purely discrete, the non-local operator $\mathcal{L}$ lacks compactness. Consequently, its spectrum contains a continuous (essential) part.

\begin{thm}
    \label{thm:contninous_spectrum}
    Let $\Omega$ and $J$ satisfy Assumption 1.1. The essential (continuous) spectrum of the non-local operator $\mathcal{L}$, denoted by $\sigma_c := \sigma_c(\mathcal{L})$, is given by the range of its multiplication part
    \begin{equation}
        \sigma_c = \left\{ -\int_\Omega J(x-y) \,dy : x \in \overline{\Omega} \right\}.
    \end{equation}
\end{thm}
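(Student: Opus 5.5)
Write $\mathcal{L} = K - b$, where
\begin{equation*}
    Kv(x) = \int_\Omega J(x-y)v(y)\,dy, \qquad b(x) = \int_\Omega J(x-y)\,dy .
\end{equation*}
The plan is to prove two facts. First, $K$ is compact on $L^2(\Omega)$. Second, the essential spectrum of the multiplication operator $M_{-b}: v \mapsto -b v$ equals the closure of its essential range, which is $\{-b(x) : x\in\overline\Omega\}$. Weyl's theorem on the invariance of the essential spectrum under compact perturbations of self-adjoint operators then gives $\sigma_{ess}(\mathcal{L}) = \sigma_{ess}(M_{-b}) = -b(\overline\Omega)$.

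\textbf{Compactness of $K$ and continuity of $b$.} $J$ is continuous on $\mathbb{R}^n$ and $\Omega$ is bounded, so $J(x-y)$ is bounded on $\Omega\times\Omega$. Hence $J(x-y) \in L^2(\Omega\times\Omega)$ and $K$ is Hilbert--Schmidt, thus compact. By (J2), $K$ is self-adjoint, and so is $\mathcal{L}$. Uniform continuity of $J$ on compacts shows that $b$ is continuous on $\overline\Omega$. The image $b(\overline\Omega)$ is therefore a compact set. It is also connected when $\overline\Omega$ is connected, but we do not need this.

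\textbf{Spectrum of $M_{-b}$.} For a continuous function $b$ on $\overline\Omega$, with $\Omega$ open, $\sigma(M_{-b})$ is the essential range of $-b$. Every point of $\overline\Omega$ is a limit of points of $\Omega$, and every neighbourhood of such a point has positive measure. So the essential range is exactly $-b(\overline\Omega)$. To show this set is essential spectrum, not merely spectrum, take $\lambda = -b(x_0)$ and build a Weyl singular sequence: $v_j = |B_j|^{-1/2}\mathbf 1_{B_j}$, where $B_j = B(x_j, r_j)\cap\Omega$ is a small set of positive measure on which $|b-b(x_0)| < 1/j$. We can pick the sets $B_j$ pairwise disjoint, or at least shrinking so that $v_j \rightharpoonup 0$; this works because they have positive measure and $|B_j|\to 0$. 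Then $\|(M_{-b}-\lambda)v_j\| \le 1/j$ while $\|v_j\|=1$ and $v_j\rightharpoonup 0$. This places $\lambda$ in the essential spectrum.

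\textbf{Passing to $\mathcal{L}$.} Since $K$ is compact, $Kv_j \to 0$ strongly. So the same sequence $v_j$ is a singular Weyl sequence for $\mathcal{L}$ at $\lambda$, which gives $-b(\overline\Omega)\subset\sigma_{ess}(\mathcal{L})$. The reverse inclusion follows from Weyl's theorem: outside $-b(\overline\Omega)$ the operator $M_{-b}-\lambda$ is invertible, so $\mathcal{L}-\lambda = (M_{-b}-\lambda)\bigl(I + (M_{-b}-\lambda)^{-1}K\bigr)$ is Fredholm of index zero. Any spectrum of $\mathcal{L}$ there is thus isolated eigenvalues of finite multiplicity.

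\textbf{Main obstacle.} The delicate step is the careful Weyl-sequence construction at boundary points $x_0\in\partial\Omega$: we must ensure $|B(x_0,r)\cap\Omega| > 0$, which holds because $x_0$ is a limit of interior points and $\Omega$ is open. The other delicate point is interpreting ``$\sigma_c$'' as the essential spectrum, since part of $-b(\overline\Omega)$ may consist of eigenvalues of infinite multiplicity.
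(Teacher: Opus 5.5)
Your proof is correct. The paper does not prove this statement itself; it only refers to Theorem 2.1 of \cite{Tadej2025}. Later, in the proof of Lemma~\ref{lem:convergence_principal_eigenfunction}, it uses the resulting picture: $\mathcal{L}$ is a compact integral part minus multiplication by $b$, and the essential spectrum is ``generated by the multiplication operator''.

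Your argument is a self-contained justification of exactly that picture:
\begin{itemize}
\item $K$ is Hilbert--Schmidt, hence compact, and $b$ is continuous on $\overline\Omega$.
\item Weyl sequences can be supported on $B(x_0,r_j)\cap\Omega$. These sets have positive measure even when $x_0\in\partial\Omega$, and $|B_j|\to 0$ already forces $v_j\rightharpoonup 0$.
\item The reverse inclusion follows from the factorization $\mathcal{L}-\lambda=(M_{-b}-\lambda)\bigl(I+(M_{-b}-\lambda)^{-1}K\bigr)$.
\end{itemize}

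Compared with the bare citation, your version makes explicit which hypotheses are used. These are continuity and symmetry of $J$, and boundedness of $\Omega$; the latter comes from the standing domain assumptions, not from Assumption~\ref{asm:KernelProperties}. It also shows that the identity holds for the essential spectrum. Your caveat that $\sigma_c$ must be read as the essential spectrum, not as the continuous spectrum in the strict sense, is well taken, and it matches how the paper uses $\sup\sigma_c$ afterwards.
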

\begin{proof}
    For the detailed proof, we refer the reader to Theorem 2.1 in \cite{Tadej2025}.
\end{proof}
Each eigenvalue $\beta$ located strictly above this continuous band, i.e., $\beta > \sup \sigma_c$, belongs to the discrete spectrum and is isolated. We define the first non-trivial eigenvalue variationally as
\begin{equation}\label{eq:beta1_def}
    \beta_1 := \sup_{\|v\|_2 = 1 \\ \int_\Omega v(x) \:dx = 0} \langle \mathcal{L}v, v \rangle.
\end{equation}
The basic properties of $\beta_1$ are summarized in the following theorem.

\begin{thm}
    \label{thm:beta1_bounds}
    Let $\beta_1$ be defined in equation \eqref{eq:beta1_def}. Then the following bounds hold true
    \begin{equation}
        \sup \sigma_c \leq \beta_1 < 0.
    \end{equation}
\end{thm}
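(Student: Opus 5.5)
We prove the two inequalities separately, writing $b(x) := -\int_\Omega J(x-y)\,dy$. The operator splits as $\mathcal{L} = K + b$, where $Kv(x) = \int_\Omega J(x-y)v(y)\,dy$ is a compact (Hilbert--Schmidt, since $J$ is continuous and $\Omega$ is bounded) self-adjoint operator, by (J2). For $v \in L^2(\Omega)$ the symmetry of $J$ gives the standard identity
\begin{equation*}
\langle \mathcal{L}v, v\rangle = -\frac12 \iint_{\Omega\times\Omega} J(x-y)\,(v(y)-v(x))^2\,dx\,dy ,
\end{equation*}
so the quadratic form is nonpositive and $\beta_1 \le 0$.

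\textbf{Upper bound $\beta_1<0$.} Suppose $\beta_1 = 0$ and take a maximizing sequence $v_m$ with $\|v_m\|_2=1$, $\int_\Omega v_m=0$ and $\langle \mathcal{L}v_m,v_m\rangle \to 0$. Pass to a subsequence with $v_m \rightharpoonup v$ weakly in $L^2$; then $\int_\Omega v = 0$. Since $K$ is compact, $\langle Kv_m,v_m\rangle \to \langle Kv,v\rangle$. Also $\langle b v_m, v_m\rangle \le \sup\sigma_c =: s < 0$, because $J(0)>0$ and continuity force $\int_\Omega J(x-y)\,dy>0$ on $\overline\Omega$, and $b$ is continuous on the compact set $\overline\Omega$. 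Hence
\begin{equation*}
0 = \lim \langle \mathcal{L}v_m,v_m\rangle \le \langle Kv,v\rangle + s ,
\end{equation*}
which gives $\langle Kv,v\rangle \ge -s>0$ and in particular $v\neq 0$. Weak lower semicontinuity of $-\langle bv,v\rangle$ (a positive multiplication form) gives $\langle \mathcal{L}v,v\rangle \ge \limsup \langle \mathcal{L}v_m,v_m\rangle = 0$. Combined with the identity this yields $\iint J(x-y)(v(y)-v(x))^2 = 0$.

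Because $J>0$ near $0$, $v$ is locally constant. The main obstacle is concluding from this that $v$ is constant on $\Omega$, which needs connectedness of $\Omega$ (or at least connectedness through the kernel support). I would make this explicit as a hypothesis, assuming $\Omega$ is a domain, i.e.\ connected. Then $v \equiv c$, and $\int_\Omega v=0$ forces $v=0$, a contradiction. So $\beta_1<0$.

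\textbf{Lower bound $\sup\sigma_c \le \beta_1$.} By Theorem \ref{thm:contninous_spectrum}, $s = \max_{\overline\Omega} b = b(x_0)$ for some $x_0\in\overline\Omega$. Take test functions concentrating at $x_0$: let $v_\varepsilon = c_\varepsilon(\mathds{1}_{B_\varepsilon^1} - \mathds{1}_{B_\varepsilon^2})$, built from two disjoint small sets of equal measure inside $\Omega\cap B(x_0,2\varepsilon)$, normalized so that $\|v_\varepsilon\|_2=1$ and $\int_\Omega v_\varepsilon=0$. Then $v_\varepsilon \rightharpoonup 0$, so compactness gives $\langle Kv_\varepsilon,v_\varepsilon\rangle\to 0$. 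Continuity of $b$ gives $\langle bv_\varepsilon,v_\varepsilon\rangle \to b(x_0)=s$. Hence $\beta_1 \ge \lim \langle\mathcal{L}v_\varepsilon,v_\varepsilon\rangle = s$.
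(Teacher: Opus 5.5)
Your proof is correct. It differs from the paper's mainly in that the paper supplies no argument: it defers both bounds to Proposition 3.4 and Lemma 3.5 of Andreu-Vaillo et al. You instead give a self-contained proof that uses only the compact-plus-multiplication splitting the paper itself exploits later in Lemma \ref{lem:convergence_principal_eigenfunction}.

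For $\beta_1<0$, your weak-limit contradiction is economical. The bound $\langle bv_m,v_m\rangle\le\max_{\overline\Omega}b=\sup\sigma_c<0$ forces $\langle Kv,v\rangle\ge-\sup\sigma_c>0$, so the weak limit is nonzero without any strong-convergence argument. Weak lower semicontinuity of the multiplication form then gives zero energy. For $\sup\sigma_c\le\beta_1$, your mean-zero functions concentrating at a maximizer of $b$ form a Weyl-type sequence on which the compact part vanishes.

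The citation buys brevity. Your version buys transparency about hypotheses, and your caveat is exactly right: the strict inequality genuinely requires connectedness of $\Omega$ (or connectivity through $\{J>0\}$). The paper's standing assumptions (``open and bounded'') never state this. For instance, take $\Omega=(0,1)\cup(10,11)$ and $J(z)=(1-|z|)_+$, which satisfies Assumption \ref{asm:KernelProperties}. Then $v=2^{-1/2}(\mathds{1}_{(0,1)}-\mathds{1}_{(10,11)})$ is admissible in \eqref{eq:beta1_def} and satisfies $\mathcal{L}v=0$, so $\beta_1=0$. Your added hypothesis is therefore a needed correction to the statement, not a gap in your proof.
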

\begin{proof}
    For the detailed proofs of lower and upper bounds of $\beta_1$ we refer the reader to Proposition 3.4 and Lemma 3.5 in \cite{AndreuVaillo2010} respectively.
\end{proof}

While Theorem \ref{thm:beta1_bounds} guarantees that $\beta_1$ cannot be below the continuous spectrum, it does not prevent $\beta_1$ from collapsing into $\sigma_c$ (i.e., $\beta_1 = \sup \sigma_c$), which would result in the non-existence of the first non-trivial eigenfunction. To ensure the existence of an isolated eigenvalue $\beta_1$, we introduce a sufficient condition regarding the geometry of $\Omega$ and the kernel $J$.

\begin{prp}
    \label{prop:spectral_gap}
    Let $\Omega$ and $J$ satisfy Assumptions \ref{asm:KernelProperties} and \ref{asm:DomainProperties_1}.
    Then the following strict lower bound holds true
    \begin{equation*}
        \sup \sigma_c < \beta_1.
    \end{equation*}
\end{prp}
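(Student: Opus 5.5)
We will prove the strict lower bound by exhibiting one admissible test function whose quadratic form strictly exceeds $\sup\sigma_c$. By Theorem \ref{thm:contninous_spectrum}, $\sup \sigma_c = -\min_{x\in\overline{\Omega}} \int_\Omega J(x-y)\,dy$. By the variational definition \eqref{eq:beta1_def}, it therefore suffices to find $v$ with $\|v\|_2=1$, $\int_\Omega v=0$, and
\begin{equation*}
\langle \mathcal{L}v,v\rangle > -\min_{x\in\overline{\Omega}}\int_\Omega J(x-y)\,dy .
\end{equation*}
The natural candidate, dictated by the splitting $\Omega=\Omega_1\cup\Omega_2$ in Assumption \ref{asm:DomainProperties_1}, is a two-valued step function $v = a\,\mathds{1}_{\Omega_1} - b\,\mathds{1}_{\Omega_2}$, with $a,b>0$ chosen so that $a|\Omega_1| = b|\Omega_2|$ (zero mean) and $a^2|\Omega_1| + b^2|\Omega_2| = 1$ (normalization). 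Explicitly,
\begin{equation*}
a=\sqrt{\frac{|\Omega_2|}{|\Omega_1||\Omega|}},\qquad b=\sqrt{\frac{|\Omega_1|}{|\Omega_2||\Omega|}} .
\end{equation*}
We implicitly need $|\Omega_1|,|\Omega_2|>0$. If one piece is null, the left side of \eqref{eq:sufficient_cond} vanishes, and the argument degenerates; we would handle this by noting the hypothesis is then meaningless, or simply assume both pieces have positive measure.

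The key computation is the symmetric representation of the form, which uses the symmetry (J2):
\begin{equation*}
\langle \mathcal{L}v,v\rangle = -\frac12 \iint_{\Omega\times\Omega} J(x-y)\,(v(x)-v(y))^2\,dx\,dy .
\end{equation*}
For the step function, $v(x)-v(y)$ vanishes when $x$ and $y$ lie in the same piece, and equals $\pm(a+b)$ across the pieces. Counting both orderings, and using symmetry once more, gives
\begin{equation*}
\langle \mathcal{L}v,v\rangle = -(a+b)^2 \iint_{\Omega_1\times\Omega_2} J(x-y)\,dx\,dy .
\end{equation*}
It then remains to bound $(a+b)^2$ by $4/|\Omega|$.

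That bound is the step I expect to need care, because it is not literally true for unbalanced partitions. Indeed, $(a+b)^2 = \frac{1}{|\Omega|}\bigl(\tfrac{|\Omega_2|}{|\Omega_1|}+\tfrac{|\Omega_1|}{|\Omega_2|}+2\bigr)$, which equals $4/|\Omega|$ only when $|\Omega_1|=|\Omega_2|$. The plan is as follows.
\begin{itemize}
\item First, see whether the constant $4/|\Omega|$ in \eqref{eq:sufficient_cond} implicitly presumes a balanced split. In that case we choose $a=b=|\Omega|^{-1/2}$, and \eqref{eq:sufficient_cond} immediately gives $\langle\mathcal{L}v,v\rangle > -\min_x\int_\Omega J(x-y)\,dy$.
\item Otherwise, replace the step function by a better-weighted test function.
\end{itemize}
A cleaner alternative avoids the issue entirely. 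Take $w = \mathds{1}_{\Omega_1}-\mathds{1}_{\Omega_2}$, which has $\|w\|_2^2=|\Omega|$ and $\langle \mathcal{L}w,w\rangle = -4\iint_{\Omega_1\times\Omega_2}J$. Project it onto mean zero, $v = w - \bar w$ with $\bar w = (|\Omega_1|-|\Omega_2|)/|\Omega|$. Because $\mathcal{L}$ annihilates constants and is symmetric, $\langle\mathcal{L}v,v\rangle=\langle\mathcal{L}w,w\rangle$, while $\|v\|_2^2 = |\Omega|(1-\bar w^2)\le|\Omega|$. Normalizing yields
\begin{equation*}
\beta_1 \ \ge\ \frac{-4\iint_{\Omega_1\times\Omega_2}J}{|\Omega|(1-\bar w^2)} .
\end{equation*}
This unfortunately loses a factor $1-\bar w^2$ in the wrong direction unless $\bar w=0$. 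So the honest conclusion of the plan is that the proof works exactly with the balanced choice $|\Omega_1|=|\Omega_2|$, or equivalently with the quantity $\langle \mathcal{L}w,w\rangle/\|w\|_2^2$ where the norm is computed before projection. I would state and use the balanced case, where everything is an equality chain followed by the strict inequality \eqref{eq:sufficient_cond}. For an unequal split, I would flag that $4$ must be replaced by $|\Omega|(a+b)^2$.
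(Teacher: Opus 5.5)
Your balanced-split argument is exactly the paper's proof: the same test function equal to $\pm|\Omega|^{-1/2}$ on $\Omega_1,\Omega_2$, the same symmetrized form giving $-\frac{4}{|\Omega|}\iint_{\Omega_1\times\Omega_2}J(x-y)\,dx\,dy$, and the same comparison with $\sup\sigma_c=-\min_{x\in\overline{\Omega}}\int_\Omega J(x-y)\,dy$ via \eqref{eq:sufficient_cond}. Your caveat is accurate and applies to the paper too: its proof simply asserts $|\Omega_1|=|\Omega_2|$, which Assumption \ref{asm:DomainProperties_1} does not require (the trivial split $\Omega_2=\emptyset$ satisfies \eqref{eq:sufficient_cond} for every domain, because the right-hand side is positive by (J1) and continuity of $J$), so the statement is only meaningful with the balanced-split hypothesis added, or with $4/|\Omega|$ replaced by your $|\Omega|/(|\Omega_1||\Omega_2|)$.
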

\begin{proof}
    We construct a test function $v \in A_1$ given by $v(x) = 1/\sqrt{|\Omega|}$ for $x \in \Omega_1$ and $v(x) = -1/\sqrt{|\Omega|}$ for $x \in \Omega_2$. Since $|\Omega_1|=|\Omega_2|$, we have $\int_\Omega v(x)\:dx = 0$ and $\|v\|_{L^2}=1$. Using the symmetrization identity, we compute the energy
    \begin{align*}
        \langle \mathcal{L}v, v \rangle &= -\frac{1}{2} \iint_{\Omega \times \Omega} J(x-y)(v(x)-v(y))^2 \, dx dy \\
        &= -2 \iint_{\Omega_1 \times \Omega_2} J(x-y) \left( \frac{2}{\sqrt{|\Omega|}} \right)^2 \, dx dy \\
        &= -\frac{4}{|\Omega|} \iint_{\Omega_1 \times \Omega_2} J(x-y) \, dx dy.
    \end{align*}
    On the other hand, $\sup \sigma_c = \max (-\int_\Omega J(x-y)dy) = - \min \int_\Omega J(x-y)dy$. 
    By the assumption \eqref{eq:sufficient_cond}, multiplying both sides by $-1$ yields $\langle \mathcal{L}v, v \rangle > \sup \sigma_c$. Taking the supremum over all $v \in L^2(\Omega)$ with $\|v\|_2 = 1$ and $\int_\Omega v(x)\:dx = 0$ concludes the proof.
\end{proof}

For the sake of exposition, we provide an example of the kernel $J$ and domain $\Omega$ satisfying Assumptions \ref{asm:KernelProperties} and \ref{asm:DomainProperties_1}.
\begin{example}
    \label{ex:generalized_exponential}
    Let $\Omega = (-L, L)$ and consider the family of symmetric, generalized exponential dispersal kernels given by
    \begin{equation}
        J_p(z) = C_p e^{-\lambda |z|^p}, \quad \lambda > 0, \quad p > 0,
    \end{equation}
    where $C_p$ is the normalization constant ensuring $\int_{\mathbb{R}} J_p(z) \, dz = 1$. The fulfillment of the sufficient condition for the spectral gap \eqref{eq:sufficient_cond} strongly depends on the shape parameter $p$:
    \begin{itemize}
        \item If $p \le 1$, the condition holds true for any domain length $L > 0$ and any dispersal parameter $\lambda > 0$.
        \item If $p > 1$, the condition holds true provided that the non-dimensional parameter $\eta = L\lambda^{1/p}$ is sufficiently large.
    \end{itemize}
\end{example}

\begin{proof}
    We split the domain into $\Omega_1 = (-L, 0)$ and $\Omega_2 = (0, L)$. Evaluating the right-hand side of Assumption \ref{asm:DomainProperties_1}, the minimum of the local potential $b(x) = \int_{-L}^L J_p(x-y) \, dy$ occurs at the boundaries $x = \pm L$, yielding
    \begin{equation*}
        \min_{x \in \overline{\Omega}} \int_{-L}^L J_p(x-y) \, dy = \int_{-L}^L J_p(L-y) \, dy = \int_0^{2L} J_p(z) \, dz.
    \end{equation*}
    For the cross-term on the left-hand side of \eqref{eq:sufficient_cond}, substituting $z = y-x$ and noticing that $|\Omega| = 2L$, we yield
    \begin{equation*}
        \frac{2}{L} \iint_{\Omega_1 \times \Omega_2} J_p(x-y) \, dy dx = \frac{2}{L} \int_0^L z J_p(z) \, dz + \frac{2}{L} \int_L^{2L} (2L-z) J_p(z) \, dz.
    \end{equation*}
    Let $\Delta(L, \lambda)$ denote the difference between the right-hand side and the left-hand side of \eqref{eq:sufficient_cond}. Splitting the integral over $(0, 2L)$ into two intervals $(0, L)$ and $(L, 2L)$, we rearrange the terms as follows:
    \begin{align}
        \begin{split}
            \label{eq:Delta_1st_set}
            \Delta(L, \lambda) &= \int_0^{2L} J_p(z) \, dz - \left( \frac{2}{L} \int_0^L z J_p(z) \, dz + \frac{2}{L} \int_L^{2L} (2L-z) J_p(z) \, dz \right) \\
            &= \int_0^L J_p(z) \, dz + \int_L^{2L} J_p(z) \, dz - \frac{2}{L} \int_0^L z J_p(z) \, dz - \frac{2}{L} \int_L^{2L} (2L-z) J_p(z) \, dz \\
            &= \int_0^L \left( 1 - \frac{2z}{L} \right) J_p(z) \, dz + \int_L^{2L} \left( 1 - \frac{2(2L-z)}{L} \right) J_p(z) \, dz. 
        \end{split} \\
    \intertext{Applying the substitution $u = 2L - z$ to the second integral and renaming the dummy variable $z$ to $u$ in the first integral, we obtain}
        \begin{split}
            \label{eq:Delta_2nd_set}
            \Delta(L, \lambda) &= \int_0^L \left( 1 - \frac{2u}{L} \right) J_p(u) \, du + \int_0^L \left( 1 - \frac{2u}{L} \right) J_p(2L-u) \, du \\
            &= \int_0^L \left( 1 - \frac{2u}{L} \right) \Big[ J_p(u) + J_p(2L-u) \Big] \, du \\
            &= \int_0^L \left( 1 - \frac{2u}{L} \right) F(u) \, du,
        \end{split} \\
    \intertext{where we denoted $F(u) := J_p(u) + J_p(2L-u)$. Using the translation $u = L/2 - t$, we shift the domain of integration to $[-L/2, L/2]$ and note that $du = -dt$}
        \begin{split}
            \label{eq:Delta_3rd_set}
            \Delta(L, \lambda) &= \int_{L/2}^{-L/2} \left( 1 - \frac{2(L/2 - t)}{L} \right) F(L/2 - t) \, (-dt) \\
            &= \int_{-L/2}^{L/2} \left( 1 - 1 + \frac{2t}{L} \right) F(L/2 - t) \, dt \\
            &= \int_{-L/2}^{L/2} \frac{2t}{L} F(L/2 - t) \, dt. 
        \end{split} \\
    \intertext{By splitting the integration domain into two symmetric halves and applying the change of variable $t \mapsto -t$ on the negative half, we obtain}
        \begin{split}
            \label{eq:Delta_4th_set}
            \Delta(L, \lambda) &= \int_{-L/2}^0 \frac{2t}{L} F(L/2 - t) \, dt + \int_0^{L/2} \frac{2t}{L} F(L/2 - t) \, dt \nonumber \\
            &= \int_{L/2}^0 \frac{-2t}{L} F(L/2 + t) \, (-dt) + \int_0^{L/2} \frac{2t}{L} F(L/2 - t) \, dt \nonumber \\
            &= -\int_0^{L/2} \frac{2t}{L} F(L/2 + t) \, dt + \int_0^{L/2} \frac{2t}{L} F(L/2 - t) \, dt \nonumber \\
            &= \int_0^{L/2} \frac{2t}{L} \Big[ F(L/2 - t) - F(L/2 + t) \Big] \, dt.
        \end{split}
    \end{align}
    Since $t > 0$ strictly inside the integration domain, the inequality $\Delta(L, \lambda) > 0$ holds if $F(z)$ is strictly decreasing on $(0, L)$. We compute the derivative
    \begin{equation*}
        F'(z) = J_p'(z) - J_p'(2L-z) = C_p \lambda p \left[ (2L-z)^{p-1} e^{-\lambda(2L-z)^p} - z^{p-1} e^{-\lambda z^p} \right].
    \end{equation*}
    To determine the sign of $F'(z)$, we define $h(z) = z^{p-1} e^{-\lambda z^p}$ and analyze its derivative for $z > 0$
    \begin{equation*}
        h'(z) = z^{p-2} e^{-\lambda z^p} \left[ (p-1) - \lambda p z^p \right].
    \end{equation*}

    \textbf{Case 1: $p \le 1$.} 
    Since $z > 0$, $\lambda > 0$, and $p-1 \le 0$, the bracket $\left[ (p-1) - \lambda p z^p \right]$ is strictly negative. Thus, $h'(z) < 0$ globally on $(0, \infty)$, meaning $h(z)$ is strictly decreasing. Because $z < 2L-z$ for all $z \in (0, L)$, we immediately obtain $h(2L-z) < h(z)$, which implies $F'(z) < 0$. Therefore we have shown that $\Delta(L, \lambda) > 0$ unconditionally for any $L > 0$ and $\lambda > 0$.

    \textbf{Case 2: $p > 1$.} 
    Here, $p-1 > 0$, meaning $h(z)$ initially increases before decaying. This causes $F'(z) > 0$ for small arguments. Consequently, Assumption \ref{asm:DomainProperties_1} fails for sufficiently small domains. 
    
    However, we can establish a rigorous global bound for the cross-term to find the sufficient conditions for the gap. Returning to the exact expression for the cross-term, we note that for the second integral in \eqref{eq:Delta_1st_set} where $z \in [L, 2L]$, the algebraic inequality $2L-z \le z$ holds. Applying this inequality yields
    \begin{equation}
        \label{eq:cross_term_bound_derivation}
        \frac{2}{L} \int_L^{2L} (2L-z) J_p(z) \, dz \le \frac{2}{L} \int_L^{2L} z J_p(z) \, dz.
    \end{equation}
    Summing the integrals over $(0, L)$ and $(L, 2L)$, the entire cross-term is bounded by the first moment of the right half of the kernel $m_1(\lambda) = \int_{-\infty}^\infty J_p(z) |z| \, dz$
    \begin{equation*}
        \frac{2}{L} \iint_{\Omega_1 \times \Omega_2} J_p(x-y) \, dy dx \le \frac{2}{L} \int_0^{2L} z J_p(z) \, dz < \frac{m_1(\lambda)}{L}.
    \end{equation*}
    
    To examine the influence of the parameter $\lambda$, which controls the variance of the kernel, we introduce a scaling argument. Utilizing the change of variables $u = \lambda^{1/p} z$, we can express the kernel as $J_p(z) = \lambda^{1/p} \tilde{J}_p(\lambda^{1/p} z)$, where $\tilde{J}_p(u) = \tilde{C}_p e^{-|u|^p}$ is the reference unscaled kernel. Consequently, the first moment scales as 
    \begin{equation*}
        m_1(\lambda) = \lambda^{-1/p} \int_{-\infty}^\infty u \tilde{J}_p(u) \, du = \lambda^{-1/p} \tilde{m}_1,
    \end{equation*}
    where $\tilde{m}_1$ is a finite constant dependent only on $p$. 
    
    Applying the same variable change to the local part (the right-hand side of \eqref{eq:sufficient_cond}), we obtain
    \begin{equation*}
        \int_0^{2L} J_p(z) \, dz = \int_0^{2 L \lambda^{1/p}} \tilde{J}_p(u) \, du.
    \end{equation*}
    We now define the non-dimensional parameter $\eta := L \lambda^{1/p}$. Using $\eta$, we can re-write the strict lower bound for our difference $\Delta(L, \lambda)$ as
    \begin{equation*}
        \Delta(L, \lambda) > \int_0^{2\eta} \tilde{J}_p(u) \, du - \frac{ \tilde{m}_1}{\eta}.
    \end{equation*}
    Since the reference kernel is normalized, the integral $\int_0^{2\eta} \tilde{J}_p(u) \, du$ converges strictly to $1/2$ as $\eta \to \infty$. Conversely, the penalty term $\frac{\tilde{m}_1}{\eta}$ decays to $0$. Thus, there exists a threshold $\eta_0 > 0$ such that for all $\eta > \eta_0$, the inequality $\Delta(L, \lambda) > 0$ is satisfied. 
\end{proof}

Now, we show that as a domain is uniformly scaled to infinity, the number $\sup \sigma_c$ remains bounded strictly away from zero.

\begin{prp}
    \label{prp:retained_mass_limit}
    Let $\Omega_0 \subset \mathbb{R}^n$ be any open, bounded, and non-empty domain satisfying the uniform interior sphere condition. For a scaling factor $\lambda > 0$, define the scaled domain $\Omega_\lambda = \{ \lambda x : x \in \Omega_0 \}$. Assume the dispersal kernel $J \in C^0(\mathbb{R}^n)$ satisfies Assumption \ref{asm:KernelProperties}.
    
    Then, the minimum retained mass of the kernel on $\Omega_\lambda$ is asymptotically bounded strictly away from zero
    \begin{equation}
        \liminf_{\lambda \to \infty} \left( \min_{x \in \overline{\Omega_\lambda}} \int_{\Omega_\lambda} J(x-y) \, dy \right) \geq \frac{1}{2} > 0.
    \end{equation}
\end{prp}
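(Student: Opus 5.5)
The plan is to use the uniform interior sphere condition to put, for every boundary or interior point, a ball of fixed radius inside $\Omega_0$ that touches the point. After scaling this ball has radius growing like $\lambda$, and a large ball through $x$ looks locally like a half-space, which carries at least half of the kernel's mass. Let $r>0$ be the interior sphere radius of $\Omega_0$. Then for every $x_0\in\overline{\Omega_0}$ there is a ball $B(c,r)\subset\Omega_0$ with $x_0\in\overline{B(c,r)}$. For interior points far from the boundary, take $x_0$ itself as a point of a ball of radius $r$ contained in $\Omega_0$; this follows from the condition, or from choosing the ball through the nearest boundary point when the distance is less than $r$, and otherwise trivially. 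Scaling gives, for every $x\in\overline{\Omega_\lambda}$, a ball $B(\lambda c,\lambda r)\subset\Omega_\lambda$ with $x$ in its closure. By $J\ge0$,
\begin{equation*}
\int_{\Omega_\lambda}J(x-y)\,dy\ \ge\ \int_{B(\lambda c,\lambda r)}J(x-y)\,dy=\int_{B(\lambda c - x,\lambda r)}J(z)\,dz .
\end{equation*}

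Next I bound the last integral from below uniformly in the position of $x$ relative to the ball. Write $e=(\lambda c-x)/|\lambda c-x|$ and note $|\lambda c-x|\le\lambda r$. Fix $R>0$. The set $B(\lambda c-x,\lambda r)\cap B(0,R)$ contains $\{z\in B(0,R): z\cdot e>\delta(\lambda)\}$, where $\delta(\lambda)=R^2/(2\lambda r)$. This holds because if $|z|\le R$ and $z\cdot e>R^2/(2\lambda r)$, then
\begin{equation*}
|z-(\lambda c-x)|^2\le |z|^2-2 d\, z\cdot e+d^2,\qquad d:=|\lambda c-x|\le \lambda r,
\end{equation*}
and I need this to be less than $\lambda^2r^2$. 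The worst case is $d=\lambda r$, where the right side is $\lambda^2r^2+|z|^2-2\lambda r z\cdot e<\lambda^2r^2$. For smaller $d$ it is handled by monotonicity in $d$, or by using instead the half-ball tangent at the point of $\partial B$ nearest $x$. The cleanest route is to enlarge: always compare with the ball of radius $\lambda r$ tangent at $x$ that is contained in $B(\lambda c,\lambda r)$ shifted appropriately. I expect to handle this bookkeeping by reducing to the boundary-touching case, since moving $x$ inward only enlarges the contained region by convexity.

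Hence
\begin{equation*}
\int_{\Omega_\lambda}J(x-y)\,dy\ge\int_{\{|z|<R,\ z\cdot e>\delta(\lambda)\}}J(z)\,dz ,
\end{equation*}
uniformly in $x$. As $\lambda\to\infty$, $\delta(\lambda)\to0$. By symmetry (J2) and normalization (J3), $\int_{\{z\cdot e>0\}}J=\tfrac12(1-\int_{\{z\cdot e=0\}}J)=\tfrac12$, since the hyperplane is null. The tail outside $B(0,R)$ and the slab $\{0<z\cdot e\le\delta\}$ must be controlled uniformly in the direction $e\in S^{n-1}$. This uniformity is the main obstacle. The tail is at most $\int_{|z|\ge R}J\le m_2(J)/R^2$ by (J4). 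For the slab, $\int_{\{|z|<R,\,0<z\cdot e\le\delta\}}J\le \|J\|_{L^\infty(B_R)}\,\omega_{n-1}R^{n-1}\delta$, using the continuity of $J$. This gives the lower bound $\tfrac12 - m_2(J)/R^2 - C_R R^2/\lambda$. Letting $\lambda\to\infty$ and then $R\to\infty$ yields $\liminf\ge \tfrac12$.
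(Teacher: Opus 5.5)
Your route differs from the paper's. Instead of extracting a subsequence of minimizers, passing to a limit point $x^*$ and applying Fatou's lemma, you apply the sphere condition at every point and get the explicit uniform bound $\tfrac12-m_2(J)/R^2-C_RR^2/\lambda$. For $x\in\partial\Omega_\lambda$ this is correct. It is also more careful than the paper, whose claim that $D_{\lambda_k}$ ``converges geometrically'' to $\lambda_kU_0$ ignores the offset $\lambda_k(\tilde x_k-x^*)$, which need not stay bounded. The containment bookkeeping you left open is harmless. With $d=|\lambda c-x|\le\lambda r$, the map $d\mapsto|z|^2-2d\,z\cdot e+d^2-\lambda^2r^2$ is convex. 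It is negative at $d=\lambda r$ by the choice of $\delta(\lambda)$, and at $d=0$ once $R<\lambda r$, hence negative on $[0,\lambda r]$.

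The genuine gap is your first step: the claim that every $x_0\in\overline{\Omega_0}$ lies in the closure of some ball $B(c,r)\subset\Omega_0$. Under the usual definition (an interior ball of radius $r$ touching each \emph{boundary} point) this does not follow, and your justification fails. The ball given at the nearest boundary point $y_0$ has its own normal, which need not be $(x_0-y_0)/|x_0-y_0|$ when $\partial\Omega_0$ is not smooth at $y_0$. The claim can in fact be false. Take $\Omega_0=B(0,10)\setminus(S_1\cup S_2)$ with $S_1=[-1,1]\times\{0\}$, $S_2=[-1,1]\times\{-0.2\}$ and $r=1$. Every boundary point has a unit ball (above $S_1$, below $S_2$, inward at the circle). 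Yet no point $(0,t)$ with $-0.2<t<0$ lies in the closure of a unit ball inside $\Omega_0$. Indeed, if $|c_1|\le1$, the vertical diameter of $B(c,1)$ misses $S_1\cup S_2$ only when $c_2\ge1$ or $c_2\le-1.2$, and then $|c-(0,t)|>1$; if $|c_1|>1$ this is immediate. With $t=-1/\lambda$ these become points at distance $1$ from $\partial\Omega_\lambda$. There your argument supplies no ball, and $B(x,\mathrm{dist}(x,\partial\Omega_\lambda))$ alone does not give $\tfrac12$. (Here the segments are null, so the conclusion survives, but your proof does not show it.)

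There are two ways to repair this.
\begin{itemize}
\item Take the hypothesis to mean that $\Omega_0$ is a union of open balls of radius $r$. This is a standard variant; under it your first step is the definition and the rest of your proof is complete.
\item Keep the boundary-point definition and add an argument for interior points within distance $R$ of $\partial\Omega_\lambda$. A blow-up works. Suppose the retained mass at $x_\lambda$ stayed below $\tfrac12-\varepsilon$. Then a local Hausdorff limit $F$ of $(\mathbb{R}^n\setminus\Omega_\lambda)-x_\lambda$ along a subsequence would have $J$-mass at least $\tfrac12+\varepsilon$. The interior balls make every boundary point of $F$ a support point, so $F$ is either Lebesgue-null or convex. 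The ball at the point of $\partial\Omega_\lambda$ nearest $x_\lambda$ gives $0\notin\mathrm{int}\,F$. In either case $F$ has $J$-mass at most $\tfrac12$, a contradiction.
\end{itemize}
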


\begin{proof}
Let $x_\lambda \in \overline{\Omega_\lambda}$ be a point where the minimum retained mass is attained for a given scale $\lambda$
\begin{equation} \label{eq:min_mass_def}
    \min_{x \in \overline{\Omega_\lambda}} \int_{\Omega_\lambda} J(x-y) \, dy = \int_{\Omega_\lambda} J(x_\lambda - y) \, dy.
\end{equation}
By performing a change of variables $z = x_\lambda - y$, we shift the domain of integration. Let $D_\lambda = x_\lambda - \Omega_\lambda = \{ x_\lambda - y : y \in \Omega_\lambda \}$. The integral in \eqref{eq:min_mass_def} becomes:
\begin{equation}
    \int_{\Omega_\lambda} J(x_\lambda - y) \, dy = \int_{D_\lambda} J(z) \, dz.
\end{equation}

We analyze the geometric properties of the shifted domain $D_\lambda$ as $\lambda \to \infty$. By definition, the origin $0 \in \overline{D_\lambda}$ for all $\lambda > 0$, because $x_\lambda \in \overline{\Omega_\lambda}$. Furthermore, $D_\lambda$ is an open, bounded set derived from scaling $\Omega_0$ by $\lambda$ and shifting it. 

Consider an arbitrary sequence of scaling factors $\lambda_k \to \infty$ as $k \to \infty$, and let $x_{\lambda_k}$ be the corresponding sequence of minimum points. We can write $x_{\lambda_k} = \lambda_k \tilde{x}_k$, where $\tilde{x}_k \in \overline{\Omega_0}$. Since $\overline{\Omega_0}$ is compact, we can extract a subsequence (still denoted by $\lambda_k$) such that $\tilde{x}_k \to x^* \in \overline{\Omega_0}$.

We define the shifted reference domain $U_0 = x^* - \Omega_0$. Since $x^* \in \overline{\Omega_0}$, the origin $0 \in \overline{U_0}$. Because $\Omega_0$ is open and non-empty, and we assumed that the domain satisfies the uniform interior sphere condition, we can choose an open ball $B(p, r) \subset U_0$ such that the origin lies exactly on its boundary, meaning $|p| = r$. As $k \to \infty$, the scaled and shifted domain $D_{\lambda_k} = \lambda_k (\tilde{x}_k - \Omega_0)$ converges geometrically to the scaled domain $\lambda_k U_0$. Since $B(p, r) \subset U_0$, it follows that $D_{\lambda_k}$ asymptotically contains the expanding ball $B(\lambda_k p, \lambda_k r)$.

Consider an arbitrary point $z$ in the open half-space $H_p = \{ z \in \mathbb{R}^n : z \cdot p > 0 \}$ and analyze the condition $z \in B(\lambda_k p, \lambda_k r)$. This is equivalent to
\begin{equation} \label{eq:ball_condition}
    |z - \lambda_k p|^2 < (\lambda_k r)^2.
\end{equation}
Expanding the squared norm and dividing by $\lambda_k > 0$ yields
\begin{equation} \label{eq:ball_condition_expanded}
    \frac{|z|^2}{\lambda_k} - 2(z \cdot p) + \lambda_k |p|^2 < \lambda_k r^2 \iff \frac{|z|^2}{\lambda_k} - 2(z \cdot p) < \lambda_k (r^2 - |p|^2).
\end{equation}
Since we specifically chose the ball such that its boundary touches the origin ($|p| = r$), the right-hand side is exactly zero for all $k$: $\lambda_k (r^2 - |p|^2) = 0$. For the left-hand side, since $z \in H_p$, we have strictly $z \cdot p > 0$. As $k \to \infty$, the term $|z|^2 / \lambda_k \to 0$. Thus, the left-hand side converges to a strictly negative value
\begin{equation}
    \lim_{k \to \infty} \left( \frac{|z|^2}{\lambda_k} - 2(z \cdot p) \right) = -2(z \cdot p) < 0.
\end{equation}
Consequently, for any fixed $z \in H_p$, there exists an index $K \in \mathbb{N}$ such that inequality \eqref{eq:ball_condition_expanded} holds strictly for all $k \geq K$. This implies $z \in B(\lambda_k p, \lambda_k r) \subset D_{\lambda_k}$, and thus $\mathds{1}_{D_{\lambda_k}}(z) = 1$ for all $k \geq K$. Taking the limit infimum confirms pointwise convergence on $H_p$
\begin{equation}
    \liminf_{k \to \infty} \mathds{1}_{D_{\lambda_k}}(z) \geq \mathds{1}_{H_p}(z) \quad \text{for all } z \in \mathbb{R}^n.
\end{equation}
Since the integral kernel $J(z) \geq 0$ and $\int_{\mathbb{R}^n} J(z) \, dz = 1$, we can apply Fatou's Lemma to obtain
\begin{equation}
    \liminf_{k \to \infty} \int_{\mathbb{R}^n} J(z) \mathds{1}_{D_{\lambda_k}}(z) \, dz \geq \int_{\mathbb{R}^n} \liminf_{k \to \infty} \big( J(z) \mathds{1}_{D_{\lambda_k}}(z) \big) \, dz \geq \int_{H_p} J(z) \, dz.
\end{equation}
By Assumption \ref{asm:KernelProperties}, $J(z)$ is symmetric. Thus, integrating it over the half-space $H_p$ we get
\begin{equation}
    \int_{H_p} J(z) \, dz = \frac{1}{2}.
\end{equation}
Since this holds for any sequence $\lambda_k \to \infty$, we conclude that
\begin{equation}
    \liminf_{\lambda \to \infty} \left( \min_{x \in \overline{\Omega_\lambda}} \int_{\Omega_\lambda} J(x-y) \, dy \right) \geq \frac{1}{2}.
\end{equation}
\end{proof}

\begin{lem}
    \label{lem:large_domain_gap}
    Let the dispersal kernel $J \in C^0(\mathbb{R}^n)$ satisfy Assumption \ref{asm:KernelProperties}. Let $\Omega_0 \subset \mathbb{R}^n$ be any open, bounded domain. There exists a scaling factor $\lambda^* > 0$ such that for all $\lambda > \lambda^*$, the non-local operator $\mathcal{L}$ on the scaled domain $\Omega_\lambda = \{ \lambda x : x \in \Omega_0 \}$ admits a strictly positive spectral gap
    \begin{equation}
        \sup \sigma_c(\Omega_\lambda) < \sup_{v \in A_1(\Omega_\lambda)} \langle \mathcal{L}v, v \rangle =: \beta_1(\Omega_\lambda).
    \end{equation}
\end{lem}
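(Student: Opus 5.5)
The plan is to compare a single well-chosen test function with the lower bound on the retained mass. On the upper side we need an element of $A_1(\Omega_\lambda)$ whose energy tends to $0$ as $\lambda\to\infty$. On the lower side, Proposition \ref{prp:retained_mass_limit} and Theorem \ref{thm:contninous_spectrum} give
\[
\sup\sigma_c(\Omega_\lambda) = -\min_{x\in\overline{\Omega_\lambda}}\int_{\Omega_\lambda}J(x-y)\,dy,
\]
and this stays below $-1/2+\varepsilon$ for large $\lambda$. Proposition \ref{prp:retained_mass_limit} was stated under a uniform interior sphere condition. For a general open bounded $\Omega_0$, I would instead use the weaker bound $\liminf_{\lambda\to\infty}\min_x\int_{\Omega_\lambda}J(x-y)\,dy\ge c>0$. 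The lemma only needs $\sup\sigma_c(\Omega_\lambda)\le -c<0$ for $\lambda$ large, so a quantitative constant such as $1/2$ is not required. If the interior sphere argument is unavailable, I would simply add that hypothesis, as in Proposition \ref{prp:retained_mass_limit}.

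For the test function, fix a coordinate $i$ and set $w(x)=x_i-\frac{1}{|\Omega_0|}\int_{\Omega_0}y_i\,dy$ on $\Omega_0$. This function is not identically zero because $\Omega_0$ is open, and it has mean zero. Define $v_\lambda(x)=c_\lambda\,w(x/\lambda)$ on $\Omega_\lambda$, with $c_\lambda$ chosen so that $\|v_\lambda\|_{L^2(\Omega_\lambda)}=1$. A change of variables gives $c_\lambda^2=\lambda^{-n}\|w\|_{L^2(\Omega_0)}^{-2}$, and $v_\lambda$ still has mean zero, so $v_\lambda\in A_1(\Omega_\lambda)$. Since $w$ is Lipschitz with constant $1$, $|v_\lambda(x)-v_\lambda(y)|^2\le c_\lambda^2\lambda^{-2}|x-y|^2$. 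The symmetrization identity then yields
\[
\langle\mathcal{L}v_\lambda,v_\lambda\rangle=-\tfrac12\iint_{\Omega_\lambda^2}J(x-y)(v_\lambda(x)-v_\lambda(y))^2\,dx\,dy\ \ge\ -\tfrac12 c_\lambda^2\lambda^{-2}|\Omega_\lambda|\,m_2(J).
\]
To get this bound, integrate first in $y$ over $\mathbb{R}^n$ using $J\ge0$, which gives the factor $m_2(J)$, and then integrate in $x$ over $\Omega_\lambda$. Since $|\Omega_\lambda|=\lambda^n|\Omega_0|$, the bound equals $-\frac{|\Omega_0|\,m_2(J)}{2\|w\|^2_{L^2(\Omega_0)}}\lambda^{-2}$. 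Choosing the coordinate $i$ that maximizes $\|w\|_2$ relates this to $I_{max}$, which matches Assumption \ref{asm:DomainProperties_2}.

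For the conclusion, $\beta_1(\Omega_\lambda)\ge -C\lambda^{-2}\to0$, while $\sup\sigma_c(\Omega_\lambda)\le -c$ for all $\lambda>\lambda_1$. Taking $\lambda^*\ge\lambda_1$ large enough that $C\lambda^{-2}<c$ for all $\lambda>\lambda^*$ gives the strict inequality.

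The main obstacle is the uniform lower bound on the retained mass for a general open bounded $\Omega_0$ without any boundary regularity, since boundary points might be cusps. I would first try to apply Proposition \ref{prp:retained_mass_limit} directly, assuming the interior sphere condition implicitly. As a fallback, I would argue that $\int_{\Omega_\lambda}J(x-y)\,dy\ge\int_{\Omega_\lambda\cap B(x,R)}J(x-y)\,dy$, using $J(0)>0$ and the continuity of $J$ to bound $J$ from below near $0$. This fallback still needs a lower bound on $|\Omega_\lambda\cap B(x,R)|$, which requires some interior cone-type condition on $\Omega_0$.
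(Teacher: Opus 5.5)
Your argument is essentially the paper's proof. It uses the same centered linear test function (your $c_\lambda w(x/\lambda)$ is just a rescaling of the paper's $c_\lambda(x_k-\lambda\bar x_k)$), the same bound $\langle\mathcal{L}v_\lambda,v_\lambda\rangle\ge -\frac{|\Omega_0|\,m_2(J)}{2\|w\|_{L^2(\Omega_0)}^2}\lambda^{-2}$, and the same appeal to Proposition \ref{prp:retained_mass_limit} to keep $\sup\sigma_c(\Omega_\lambda)$ below a fixed negative constant. Your caveat is well founded: the paper applies Proposition \ref{prp:retained_mass_limit} to an arbitrary open bounded $\Omega_0$ although that proposition assumes the uniform interior sphere condition (and at an outward cusp tip the retained mass does tend to $0$), so both proofs establish the lemma only under a regularity hypothesis such as the interior sphere or cone condition you propose.
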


\begin{proof}
    
    Let $\bar{x} = \frac{1}{|\Omega_0|} \int_{\Omega_0} y \, dy$ be the geometric barycenter of the reference domain $\Omega_0$. The barycenter of the scaled domain $\Omega_\lambda$ is exactly $\lambda \bar{x}$. We define a linear test function centered at the barycenter of the domain $\Omega$
    \begin{equation}
        v_\lambda(x) = c_\lambda (x_k - \lambda \bar{x}_k),
    \end{equation}
    where the coordinate axis $k \in \{1, \dots, n\}$ corresponds to the maximum moment of inertia of $\Omega_0$, defined as
    \begin{equation}
        I_k(\Omega_0) = \max_{1 \leq i \leq n} \int_{\Omega_0} (y_i - \bar{x}_i)^2 \, dy.
    \end{equation}
    
    The zero-mean condition required for $v_\lambda \in A_1(\Omega_\lambda)$ is satisfied by the definition of the barycenter
    \begin{equation}
        \int_{\Omega_\lambda} v_\lambda(x) \, dx = c_\lambda \int_{\Omega_\lambda} (x_k - \lambda \bar{x}_k) \, dx = 0.
    \end{equation}
    
    We determine the constant $c_\lambda$ by enforcing the $L^2$ normalization constraint $\|v_\lambda\|_{L^2}^2 = 1$. Applying the change of variables $x = \lambda y$
    \begin{equation}
        1 = \int_{\Omega_\lambda} c_\lambda^2 (x_k - \lambda \bar{x}_k)^2 \, dx = c_\lambda^2 \lambda^{n+2} \int_{\Omega_0} (y_k - \bar{x}_k)^2 \, dy = c_\lambda^2 \lambda^{n+2} I_k(\Omega_0).
    \end{equation}
    This yields $c_\lambda^2 = (\lambda^{n+2} I_k(\Omega_0))^{-1}$.
    
    We evaluate the quadratic form of the operator $\mathcal{L}$ on the test function using the symmetrization identity
    \begin{equation}
        \langle \mathcal{L}v_\lambda, v_\lambda \rangle = -\frac{1}{2} \iint_{\Omega_\lambda \times \Omega_\lambda} J(x-y) (v_\lambda(x) - v_\lambda(y))^2 \, dx dy.
    \end{equation}
    The difference evaluates to $v_\lambda(x) - v_\lambda(y) = c_\lambda(x_k - y_k)$. The barycenter terms completely cancel out. Substituting this difference and enlarging the integration domain for the inner integral from $\Omega_\lambda$ to the entire space $\mathbb{R}^n$, we establish a strict lower bound
    \begin{align} \label{eq:energy_scaling_bound}
        \langle \mathcal{L}v_\lambda, v_\lambda \rangle &\geq -\frac{c_\lambda^2}{2} \int_{\Omega_\lambda} \int_{\mathbb{R}^n} J(x-y) (x_k - y_k)^2 \, dy \, dx \nonumber \\
        &= -\frac{c_\lambda^2}{2} \int_{\Omega_\lambda} \int_{\mathbb{R}^n} J(z) z_k^2 \, dz \, dx \nonumber \\
        &\geq -\frac{c_\lambda^2}{2} |\Omega_\lambda| \int_{\mathbb{R}^n} J(z) |z|^2 \, dz \nonumber \\
        &= -\frac{c_\lambda^2}{2} \lambda^n |\Omega_0| m_2(J).
    \end{align}
    The finiteness of the second moment $m_2(J) = \int_{\mathbb{R}^n} J(z) |z|^2 \, dz < \infty$ is guaranteed by Assumption \ref{asm:KernelProperties}. Substituting the normalization constant $c_\lambda^2$ into \eqref{eq:energy_scaling_bound}, we obtain
    \begin{equation}
        \langle \mathcal{L}v_\lambda, v_\lambda \rangle \geq -\frac{\lambda^n |\Omega_0| m_2(J)}{2 \lambda^{n+2} I_k(\Omega_0)} = -\frac{|\Omega_0| m_2(J)}{2 I_k(\Omega_0)} \cdot \frac{1}{\lambda^2}.
    \end{equation}
    The term $\frac{|\Omega_0| m_2(J)}{2 I_k(\Omega_0)}$ is a strictly positive geometric constant entirely independent of the scaling factor $\lambda$. As $\lambda \to \infty$, the lower bound of the energy strictly approaches $0$ as $\mathcal{O}(\lambda^{-2})$. 
    
    From Proposition \ref{prp:retained_mass_limit} it follows that there exists $\delta > 0$ such that $\sup \sigma_c < - \delta$ for all $\lambda > \lambda_0$. Thus, there must exist a sufficiently large scaling factor $\lambda^* \geq \lambda_0$ such that for all $\lambda > \lambda^*$:
    \begin{equation}
        \langle \mathcal{L}v_\lambda, v_\lambda \rangle \geq -\frac{|\Omega_0| m_2(J)}{2 I_k(\Omega_0)} \cdot \frac{1}{\lambda^2} > -\delta \geq \sup \sigma_c(\Omega_\lambda).
    \end{equation}
    Since the supremum over the admissible space $A_1(\Omega_\lambda)$ must be at least as large as the energy of this specific test function, we conclude $\beta_1(\Omega_\lambda) \geq \langle \mathcal{L}v_\lambda, v_\lambda \rangle > \sup \sigma_c(\Omega_\lambda)$, establishing the existence of the strictly positive spectral gap.
\end{proof}

Having established the condition for the principal eigenvalue, we can generalize the scaling argument to guarantee the existence of higher-order spectral gaps.

\begin{lem}
    \label{lem:large_domain_gap_k}
    Let the dispersal kernel $J \in C^0(\mathbb{R}^n)$ satisfy Assumption \ref{asm:KernelProperties}. Let $\Omega_0 \subset \mathbb{R}^n$ be any open, bounded domain. For any given integer $k \geq 1$, assume that the first $k-1$ non-trivial eigenfunctions $v_1, \dots, v_{k-1}$ exist. There exists a scaling factor $\lambda_k^* > 0$ such that for all $\lambda > \lambda_k^*$, the non-local operator $\mathcal{L}$ on the scaled domain $\Omega_\lambda = \{ \lambda x : x \in \Omega_0 \}$ admits at least $k$ strictly positive spectral gaps:
    \begin{equation}
        \sup \sigma_c(\Omega_\lambda) < \beta_k(\Omega_\lambda) \leq \dots \leq \beta_1(\Omega_\lambda).
    \end{equation}
\end{lem}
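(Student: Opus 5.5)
We will use a min-max (Courant--Fischer type) test-space argument in place of the single linear test function of Lemma \ref{lem:large_domain_gap}. Here $\beta_k(\Omega_\lambda) = \sup_{v \in A_k(\Omega_\lambda)} \langle \mathcal{L}v, v\rangle$, and $A_k$ is the set of normalized functions orthogonal to $v_0, v_1, \dots, v_{k-1}$. Fix a reference family of $k+1$ linearly independent functions $\phi_0 \equiv 1, \phi_1, \dots, \phi_k \in C^1(\overline{\Omega_0})$, for instance $1, y_1, \dots$ together with low-degree polynomials, which are linearly independent on the open set $\Omega_0$. Put $W_\lambda = \mathrm{span}\{\phi_j(\cdot/\lambda)\}_{j=0}^k \subset L^2(\Omega_\lambda)$. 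This space has dimension $k+1$, and the constraints defining $A_k$ are $k$ linear conditions (orthogonality to $v_0, \dots, v_{k-1}$). Hence $W_\lambda$ contains a nonzero $v$ satisfying them, and after normalization $v \in W_\lambda \cap A_k(\Omega_\lambda)$. This reduces the problem to bounding the energy of an \emph{arbitrary} normalized element of $W_\lambda$ from below, uniformly in $\lambda$ up to the factor $\lambda^{-2}$.

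For the energy bound, write $v(x) = \lambda^{-n/2} w(x/\lambda)$ with $w \in W_1 = \mathrm{span}\{\phi_j\}$ and $\|w\|_{L^2(\Omega_0)} = 1$. Since $W_1$ is finite dimensional, all norms on it are equivalent. In particular, there is a constant $L_k$, depending only on $k$, $n$, $\Omega_0$ and the chosen $\phi_j$, such that $\|\nabla w\|_{L^\infty(\Omega_0)}^2 \le L_k$ for every unit-norm $w \in W_1$. To handle nonconvex $\Omega_0$, we use a Lipschitz bound on the extension of each $\phi_j$ to $\mathbb{R}^n$. Because polynomials are globally defined, the inequality $|w(a)-w(b)|^2 \le L_k |a-b|^2$ holds on bounded sets, and on the relevant region we can take the polynomial's own global Lipschitz constant there, which is legitimate since $\overline{\Omega_0}$ is compact. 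It follows that
\begin{equation}\label{eq:lipschitz_scaling}
|v(x)-v(y)|^2 \le \lambda^{-n} L_k \frac{|x-y|^2}{\lambda^2}.
\end{equation}
Inserting this into the symmetrization identity and enlarging the inner integral to $\mathbb{R}^n$, exactly as in \eqref{eq:energy_scaling_bound}, gives
\begin{equation*}
\langle \mathcal{L}v, v\rangle \ge -\frac{L_k}{2}\,\lambda^{-n-2}\,|\Omega_\lambda|\, m_2(J) = -\frac{L_k |\Omega_0| m_2(J)}{2}\,\lambda^{-2}.
\end{equation*}

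To conclude, apply Proposition \ref{prp:retained_mass_limit}, assuming the interior sphere condition on $\Omega_0$ as there. It gives $\delta>0$ and $\lambda_0$ such that $\sup\sigma_c(\Omega_\lambda) < -\delta$ for $\lambda>\lambda_0$. Then choose $\lambda_k^* \ge \lambda_0$ so large that $L_k|\Omega_0| m_2(J)\lambda^{-2}/2 < \delta$. This yields $\beta_k(\Omega_\lambda) \ge \langle \mathcal{L}v,v\rangle > \sup\sigma_c(\Omega_\lambda)$. The ordering $\beta_k \le \dots \le \beta_1$ follows from the nesting $A_k \subset A_{k-1} \subset \dots \subset A_1$.

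The main obstacle is that the constraint set $A_k$ depends on the unknown eigenfunctions $v_1,\dots,v_{k-1}$, and these change with $\lambda$. The argument avoids any control of them by using only that they impose $k$ linear constraints, so the dimension count in $W_\lambda$ succeeds regardless of what they are. Uniformity of $L_k$ in $\lambda$ is then automatic, because scaling maps $W_\lambda$ isometrically (after the $\lambda^{-n/2}$ normalization) onto the fixed space $W_1$. A secondary subtlety is that the earlier $\lambda_{j}^*$ for $j<k$ must be exceeded so that $v_1,\dots,v_{k-1}$ actually exist. We will take $\lambda_k^* \ge \max_{j<k}\lambda_j^*$, using the inductive existence hypothesis of the statement.
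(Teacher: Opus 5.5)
Your proposal is correct and follows essentially the same route as the paper: a $(k+1)$-dimensional space of $C^1$ functions on $\Omega_0$ transported by $u \mapsto \lambda^{-n/2}u(x/\lambda)$, a dimension count against the $k$ orthogonality constraints, a uniform Lipschitz constant from equivalence of norms giving $\langle \mathcal{L}v,v\rangle \geq -C\,m_2(J)\lambda^{-2}$, and Proposition \ref{prp:retained_mass_limit} to keep $\sup\sigma_c(\Omega_\lambda)$ below $-\delta$. Your refinements are all legitimate fixes of points the paper glosses over: choosing polynomials keeps the Lipschitz bound valid for nonconvex $\Omega_0$, you keep the factor $|\Omega_0|$ where the paper writes $|\Omega_\lambda|=\lambda^n$, and you make explicit the interior sphere hypothesis that Proposition \ref{prp:retained_mass_limit} requires.
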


\begin{proof}
    Let $\Omega_0$ be the reference domain. We construct a $(k+1)$-dimensional subspace of smooth functions on $\Omega_0$. Choose $k+1$ linearly independent functions $\phi_0, \phi_1, \dots, \phi_k \in C^1(\overline{\Omega_0})$, and let $W_0 = \text{span}\{\phi_0, \dots, \phi_k\}$. 
    Because $W_0$ is a finite-dimensional subspace composed of continuously differentiable functions on a bounded domain, all functions in $W_0$ are globally Lipschitz continuous. Furthermore, by the equivalence of norms in finite dimensions, there exists a uniform constant $L_k > 0$ such that for every function $u \in W_0$ satisfying the normalization $\|u\|_{L^2(\Omega_0)} = 1$, the Lipschitz bound holds:
    \begin{equation}
        (u(\tilde{x}) - u(\tilde{y}))^2 \leq L_k |\tilde{x} - \tilde{y}|^2 \quad \text{for all } \tilde{x}, \tilde{y} \in \Omega_0.
    \end{equation}

    For a given scaling factor $\lambda > 0$, we define the mapped $(k+1)$-dimensional subspace on the scaled domain $\Omega_\lambda$ as $W_\lambda = \{ v_\lambda(x) = \lambda^{-n/2} u(x/\lambda) : u \in W_0 \}$.
    The scaling factor $\lambda^{-n/2}$ ensures that the $L^2$ normalization is preserved. For any $v \in W_\lambda$ with $\|v\|_{L^2(\Omega_\lambda)} = 1$, the corresponding function $u \in W_0$ satisfies:
    \begin{equation}
        1 = \int_{\Omega_\lambda} (v(x))^2 \, dx = \int_{\Omega_\lambda} \lambda^{-n} (u(x/\lambda))^2 \, dx = \int_{\Omega_0} (u(\tilde{x}))^2 \, d\tilde{x} = \|u\|_{L^2(\Omega_0)}^2.
    \end{equation}

    We select a specific test function $v_\lambda \in W_\lambda$ that satisfies the constraints of the admissible set $A_k(\Omega_\lambda)$. By definition, $v \in A_k(\Omega_\lambda)$ requires $\|v\|_{L^2(\Omega_\lambda)} = 1$ and orthogonality to the first $k$ eigenfunctions $v_0, v_1, \dots, v_{k-1}$. 
    Imposing these $k$ orthogonality constraints on the $(k+1)$-dimensional subspace $W_\lambda$ yields a system of $k$ homogeneous linear equations for the $k+1$ expansion coefficients of the basis of $W_\lambda$. Since the number of unknowns strictly exceeds the number of equations, the system admits a non-trivial solution. Normalizing this solution provides a function $v_\lambda \in W_\lambda \cap A_k(\Omega_\lambda)$.
    
    Next, we evaluate the quadratic form of the operator $\mathcal{L}$ on the chosen test function $v_\lambda \in W_\lambda$ using the symmetrization identity
    \begin{equation}
        \langle \mathcal{L}v_\lambda, v_\lambda \rangle = -\frac{1}{2} \iint_{\Omega_\lambda \times \Omega_\lambda} J(x-y) (v_\lambda(x) - v_\lambda(y))^2 \, dx dy.
    \end{equation}
    Applying the definition of $v_\lambda$ and the uniform Lipschitz bound $L_k$ from the reference domain, the squared difference scales as
    \begin{align} \label{eq:lipschitz_scaling}
        (v_\lambda(x) - v_\lambda(y))^2 &= \lambda^{-n} \left( u\left(\frac{x}{\lambda}\right) - u\left(\frac{y}{\lambda}\right) \right)^2 \nonumber \\
        &\leq \lambda^{-n} L_k \left| \frac{x}{\lambda} - \frac{y}{\lambda} \right|^2 = \frac{L_k}{\lambda^{n+2}} |x-y|^2.
    \end{align}
    Substituting this bound into the energy functional and enlarging the integration domain for the inner integral from $\Omega_\lambda$ to the entire space $\mathbb{R}^n$, we establish a strict lower bound for the energy of any normalized function in $W_\lambda$
    \begin{align} \label{eq:energy_scaling_bound_k}
        \langle \mathcal{L}v_\lambda, v_\lambda \rangle &\geq -\frac{L_k}{2 \lambda^{n+2}} \int_{\Omega_\lambda} \int_{\Omega_\lambda} J(x-y) |x-y|^2 \, dy \, dx \nonumber \\
        &\geq -\frac{L_k}{2 \lambda^{n+2}} \int_{\Omega_\lambda} \int_{\mathbb{R}^n} J(x-y) |x-y|^2 \, dy \, dx \nonumber \\
        &= -\frac{L_k}{2 \lambda^{n+2}} \int_{\Omega_\lambda} \left( \int_{\mathbb{R}^n} J(z) |z|^2 \, dz \right) dx \nonumber \\
        &= -\frac{L_k}{2 \lambda^{n+2}} |\Omega_\lambda| m_2(J).
    \end{align}
    Since $|\Omega_\lambda| = \lambda^n$, the $\lambda^n$ terms cancel out, yielding a uniform lower bound for the entire subspace $W_\lambda$
    \begin{equation}
        \langle \mathcal{L}v_\lambda, v_\lambda \rangle \geq -\frac{L_k m_2(J)}{2 \lambda^2} =: - C(k,n,\Omega_0)  \frac{ m_2(J) }{\lambda^2}.
    \end{equation}

    From Proposition \ref{prp:retained_mass_limit}, it follows that there exists $\delta > 0$ such that $\sup \sigma_c(\Omega_\lambda) < - \delta$ for all $\lambda > \lambda_0$. Thus, there must exist a sufficiently large scaling factor $\lambda_k^* \geq \lambda_0$ such that for all $\lambda > \lambda_k^*$
    \begin{equation}
        \beta_k(\Omega_\lambda) \geq - C(k,n,\Omega_0) \frac{m_2(J)}{\lambda^2} > -\delta \geq \sup \sigma_c(\Omega_\lambda).
    \end{equation}
    This end the proof of Lemma.
\end{proof}

\section{Non-local eigenvalue problem}

We start, by constructing first non-trivial solution $(\beta_1, v_1) \in \mathbb{R} \times L^2(\Omega)$ of the non-local eigenvalue problem \eqref{eq:nonlocal_eigenvalue_problem}. The solution is obtained variationaly, that is we consider the optimization problem
\begin{equation}
    \label{eq:eigenvalue_problem_1}
    \beta_1 = \sup_{v \in A_1} \: \langle \mathcal L v, v \rangle,
\end{equation}
where the closed set of admissible functions $A_1 \subset L^2(\Omega)$ is defined by the formula
\begin{equation}
    \label{eq:A_set_Def}
    A_1 = \left\{ v \in L^2(\Omega) : \int_\Omega v(x) \:dx = 0, \: \int_\Omega v^2(x) \:dx = 1 \right\}
\end{equation}

\begin{lem}[Galerkin Approximation]
\label{lem:galerkin_approximation}
Let $V_N = \operatorname{span}\{\varphi_0, \dots, \varphi_N\} \subset L^2(\Omega)$ be a finite-dimensional subspace. The projection of the non-local eigenvalue problem \eqref{eq:nonlocal_eigenvalue_problem} onto $V_N$ is equivalent to the algebraic system 
\begin{equation}
    \label{eq:matrix_system_lemma}
    (\mathbf{A} - \mathbf{B})\mathbf{c} = \beta_1^N \mathbf{c},
\end{equation}
    where $\mathbf{A}, \mathbf{B} \in \mathbb{R}^{(N+1) \times (N+1)}$ are symmetric matrices and $\mathbf{c} \in \mathbb{R}^{N+1}$ is the vector of coefficients. Under the assumption $\varphi_0 = \frac{1}{\sqrt{|\Omega|}}$, the condition $v \in A_1 \cap V_N$ holds if and only if
    \begin{equation}
        \label{eq:system_constraint_lemma}
        c_0 = 0, \quad \mathbf{c}^T\mathbf{c} = 1.
    \end{equation}
\end{lem}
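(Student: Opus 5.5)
The Galerkin projection is most convenient with an orthonormal basis of $V_N$. So I first assume, or arrange by Gram--Schmidt, that $\{\varphi_0,\dots,\varphi_N\}$ is orthonormal in $L^2(\Omega)$, with $\varphi_0 = |\Omega|^{-1/2}$ kept as the first element. This is the only place where the hypothesis $\varphi_0 = 1/\sqrt{|\Omega|}$ enters in an essential way.

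Next I split the operator into its integral and multiplicative parts. I write
\[
\mathcal{L}v = Kv - b\,v, \qquad Kv(x) = \int_\Omega J(x-y)v(y)\,dy, \qquad b(x) = \int_\Omega J(x-y)\,dy .
\]
The Galerkin problem asks for $v = \sum_{j=0}^N c_j \varphi_j \in V_N$ such that
\[
\langle \mathcal{L}v - \beta_1^N v, \varphi_i\rangle = 0 \quad \text{for } i = 0,\dots,N .
\]
Substituting the expansion gives $(\mathbf{A}-\mathbf{B})\mathbf{c} = \beta_1^N \mathbf{G}\mathbf{c}$, where
\[
\mathbf{A}_{ij} = \iint_{\Omega\times\Omega} J(x-y)\varphi_j(y)\varphi_i(x)\,dy\,dx, \qquad \mathbf{B}_{ij} = \int_\Omega b(x)\varphi_i(x)\varphi_j(x)\,dx, \qquad \mathbf{G}_{ij} = \langle\varphi_j,\varphi_i\rangle .
\]
Orthonormality gives $\mathbf{G} = I$, which yields \eqref{eq:matrix_system_lemma}.

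For symmetry, $\mathbf{B}$ is obviously symmetric. $\mathbf{A}$ is symmetric by Fubini (justified since $J$ is bounded on the bounded set $\Omega-\Omega$ and $\varphi_i \in L^2(\Omega)$), followed by swapping the variables $x \leftrightarrow y$ and using (J2), $J(x-y) = J(y-x)$. Conversely, because $\{\varphi_i\}$ spans $V_N$, the algebraic system is equivalent to the weak projected equation $P_N\mathcal{L}v = \beta_1^N v$ on $V_N$, where $P_N$ is the orthogonal projection onto $V_N$. This gives the claimed equivalence.

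For the constraint, $\int_\Omega v\,dx = |\Omega|^{1/2}\langle v,\varphi_0\rangle = |\Omega|^{1/2} c_0$ by orthonormality, so the mean-zero condition is equivalent to $c_0 = 0$. Parseval in $V_N$ gives $\|v\|_2^2 = \mathbf{c}^T\mathbf{c}$, so the normalization is $\mathbf{c}^T\mathbf{c} = 1$. Together these give \eqref{eq:system_constraint_lemma}.

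The only real subtlety is the orthonormality: if the $\varphi_i$ are merely linearly independent, one obtains a generalized eigenproblem with Gram matrix $\mathbf{G}$ and the constraint $\mathbf{c}^T\mathbf{G}\mathbf{c} = 1$. I would therefore state explicitly that the basis is taken orthonormal; otherwise the stated form $\mathbf{c}^T\mathbf{c} = 1$ is false.
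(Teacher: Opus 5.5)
Your proposal is correct and follows essentially the same Galerkin route as the paper: substitute $v=\sum_m c_m\varphi_m$, test against each $\varphi_n$, and use orthonormality to reach $(\mathbf{A}-\mathbf{B})\mathbf{c}=\beta_1^N\mathbf{c}$. The paper also assumes orthonormality tacitly, via ``given an orthonormal set''. There are three small differences. The paper obtains the entries $a_{n,l}$ by expanding $J(x-y)$ in the tensor basis over a complete orthonormal system, while you write them directly as double integrals and need only orthonormality of $\varphi_0,\dots,\varphi_N$. You also justify the symmetry of $\mathbf{A}$ and the equivalence of $v \in A_1 \cap V_N$ with $c_0=0$, $\mathbf{c}^T\mathbf{c}=1$, which the paper only asserts.
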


\begin{proof}

    Given an orthonormal set $\{\varphi_i\}_{i=0}^\infty$ in $L^2(\Omega)$, since $L^2(\Omega^2) = L^2(\Omega) \otimes L^2(\Omega)$, we expand function of two variables $K(x,y) = J(x-y)$ into series with symmetric coefficients $a_{k,l}$
    \begin{equation}
        \label{eq:kernel_expansion}
        K(x,y) = \sum_{k,l = 0}^\infty a_{k, l} \varphi_k(x) \varphi_l(y).
    \end{equation}
    Plugging \eqref{eq:kernel_expansion} into the equation \eqref{eq:nonlocal_eigenvalue_problem} we get
    \begin{equation}
        \label{eq:nonlocal-eigenvalue-problem-kernel-expansion}
        \sum_{k, l = 0}^\infty a_{k, l} \varphi_k(x)\int_\Omega \varphi_l(y) v_1(y)\:dy - \int_\Omega J(x-y)\:dy\: v_1(x) = \beta_1 v_1(x)
    \end{equation}
    For convenience, denote the function $b(x) = \int_\Omega J(x-y)\:dy$. We seek for a finite dimensional solution of the form
    \begin{equation}
        \label{eq:v_N_Def}
        v_1^N(x) = \sum_{m=0}^N c_m \varphi_m(x).
    \end{equation}
    Substituting formula \eqref{eq:v_N_Def} into equation \eqref{eq:nonlocal-eigenvalue-problem-kernel-expansion}, we obtain
    \begin{equation}
        \label{eq:nonlocal-eigenvalue-problem-v_N_plugged-kernel-expansion}
        \sum_{k, l = 0}^\infty \sum_{m=0}^N a_{k, l} c_m \varphi_k(x)\int_\Omega \varphi_l(y) \varphi_m(y)\:dy - \sum_{m=0}^N c_m b(x) \varphi_m(x) = \beta_1^N \sum_{m=0}^N c_m \varphi_m(x).
    \end{equation}
    For fixed $n \in \{0, \ldots, N\}$ we multiply equation \eqref{eq:nonlocal-eigenvalue-problem-v_N_plugged-kernel-expansion} by $\varphi_n$ and integrate in $\Omega$. Re-indexing and using orthogonality of $\{\varphi_i\}_{i=0}^N$, this yields the simplified form
    \begin{equation}
        \label{eq:coeffs_system}
        \sum_{l=0}^N(a_{n, l} - b_{l,n}) c_l = \beta_1^N c_n.
    \end{equation}
    For our convenience we denote the real, symmetric matrices $\mathbf{A}, \mathbf{B}$ of size $(N+1) \times (N+1)$ with $\mathbf{A}_{i,j} = a_{i,j}, \mathbf{B}_{i, j} = b_{i,j}$, and a solution vector $\mathbf{c} = (c_0, c_1, \ldots, c_N)^T$. Then, we re-write equation \eqref{eq:coeffs_system} as
    \begin{equation}
        \label{eq:matrix_system}
        (\mathbf{A} - \mathbf{B}) \mathbf{c} = \beta_1^N \mathbf{c}.
    \end{equation}
    Before we solve the eigenvalue problem \eqref{eq:matrix_system}, we need to impose the constraints as in the definition \eqref{eq:A_set_Def}. Namely, we get that the vector of coefficients $\mathbf{c}$ need to satisfy
    \begin{equation}
        \label{eq:matrix_system_constraint}
        c_0 = 0, \quad \mathbf{c}^T \mathbf{c} = 1.
    \end{equation}
    
\end{proof}

We provide the following Proposition regarding solvability of the problem \eqref{eq:matrix_system}-\eqref{eq:matrix_system_constraint}.
\begin{prp}
    \label{prp:finite_dimensional_minimizer}
    Let $\mathcal{C} = \left\{ \mathbf{c} \in \mathbb{R}^{N+1} : c_0 = 0, \, \|\mathbf{c}\| = 1 \right\}$ be the constraint set. 
    The variational problem
    \begin{equation}
        \label{eq:variational_discrete}
        \beta_1^N := \sup_{\mathbf{c} \in \mathcal{C}} \: \langle (\mathbf{A} - \mathbf{B}) \mathbf{c}, \mathbf{c} \rangle
    \end{equation}
    admits a maximizer $\mathbf{c}^* \in \mathcal{C}$. Moreover, the pair $(\beta_1^N, \mathbf{c}^*)$ is a solution to the algebraic system \eqref{eq:matrix_system}-\eqref{eq:matrix_system_constraint}.
\end{prp}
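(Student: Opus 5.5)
The plan is to treat this as a finite-dimensional symmetric eigenvalue problem: first existence of a maximizer by compactness, then the Euler--Lagrange (Lagrange multiplier) equation. Write $\mathbf{M} := \mathbf{A} - \mathbf{B}$, which is real symmetric by Lemma \ref{lem:galerkin_approximation}.

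\textbf{Existence.} The set $\mathcal{C}$ is the unit sphere of the $N$-dimensional subspace $\{c_0 = 0\} \subset \mathbb{R}^{N+1}$. It is therefore closed, bounded and non-empty (for $N \geq 1$), hence compact. The map $\mathbf{c} \mapsto \langle \mathbf{M}\mathbf{c}, \mathbf{c}\rangle$ is a polynomial and thus continuous. By the Weierstrass theorem the supremum $\beta_1^N$ is finite and attained at some $\mathbf{c}^* \in \mathcal{C}$.

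\textbf{Euler--Lagrange equation.} The constraints $g_1(\mathbf{c}) = c_0$ and $g_2(\mathbf{c}) = \mathbf{c}^T\mathbf{c} - 1$ have gradients $\mathbf{e}_0$ and $2\mathbf{c}$. At any $\mathbf{c} \in \mathcal{C}$ these are linearly independent, because $c_0 = 0$ and $\mathbf{c} \neq 0$. The Lagrange multiplier rule therefore gives $\mu, \nu \in \mathbb{R}$ with
\begin{equation*}
    \mathbf{M}\mathbf{c}^* = \mu\, \mathbf{c}^* + \nu\, \mathbf{e}_0 .
\end{equation*}
Taking the inner product with $\mathbf{c}^*$ and using $c^*_0 = 0$ and $\|\mathbf{c}^*\| = 1$ yields $\mu = \langle \mathbf{M}\mathbf{c}^*, \mathbf{c}^*\rangle = \beta_1^N$. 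An equivalent route is to restrict $\mathbf{M}$ to the subspace $\{c_0 = 0\}$ and apply the Rayleigh quotient characterization of its largest eigenvalue. Either way, one obtains only the projected equation $P\mathbf{M}\mathbf{c}^* = \beta_1^N \mathbf{c}^*$, where $P$ kills the $0$-th component.

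\textbf{Main obstacle: showing $\nu = 0$.} This amounts to showing that the full system \eqref{eq:matrix_system} holds, including its $0$-th row. Here I would use the structure of $\mathbf{M}$ rather than abstract optimization. By construction, in Lemma \ref{lem:galerkin_approximation} one has $\mathbf{M}_{n,l} = \langle \mathcal{L}\varphi_l, \varphi_n\rangle$, since the expansion of $K$ gives $a_{n,l}$ and $b_{l,n} = \int_\Omega b\,\varphi_l\varphi_n$. The operator $\mathcal{L}$ is self-adjoint on $L^2(\Omega)$ because $J$ is symmetric (J2). Moreover $\mathcal{L}\varphi_0 = 0$ since $\varphi_0$ is constant (Remark 2.1). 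Hence
\begin{equation*}
    \mathbf{M}_{0,l} = \langle \mathcal{L}\varphi_l, \varphi_0\rangle = \langle \varphi_l, \mathcal{L}\varphi_0\rangle = 0 \quad \text{for all } l,
\end{equation*}
and by symmetry the same holds for the $0$-th column. Consequently $(\mathbf{M}\mathbf{c}^*)_0 = 0$, which forces $\nu = 0$. Thus $(\mathbf{A}-\mathbf{B})\mathbf{c}^* = \beta_1^N \mathbf{c}^*$ together with $c^*_0 = 0$ and $\mathbf{c}^{*T}\mathbf{c}^* = 1$, i.e. $(\beta_1^N, \mathbf{c}^*)$ solves \eqref{eq:matrix_system}--\eqref{eq:matrix_system_constraint}.

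The only delicate point is making the identification $\mathbf{M}_{n,l} = \langle \mathcal{L}\varphi_l,\varphi_n\rangle$ precise from the series expansion. This requires the orthonormality of $\{\varphi_i\}$ and $L^2$-convergence of the expansion \eqref{eq:kernel_expansion}, which holds since $K \in L^2(\Omega\times\Omega)$ for bounded $\Omega$ and continuous $J$.
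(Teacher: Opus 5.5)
Your proof is correct, and it is more complete than the paper's. The existence step is the same as in the paper: compactness of $\mathcal{C}$ and continuity of the quadratic form. Your Lagrange-multiplier step is equivalent to the paper's appeal to the min-max principle for the submatrix $\tilde{\mathbf{M}}$ obtained by deleting the $0$-th row and column.

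The two arguments part ways at the last step. The paper goes directly from ``$\mathbf{c}^*$ is an eigenvector of $\tilde{\mathbf{M}}$'' to $(\mathbf{A}-\mathbf{B})\mathbf{c}^*=\beta_1^N\mathbf{c}^*$. That only gives rows $1,\dots,N$ of the system. The $0$-th row, $\sum_{l\geq 1}\mathbf{M}_{0,l}c^*_l=0$, is never justified, and it fails for a general symmetric matrix. For example, take $N=1$, $\mathbf{M}_{00}=\mathbf{M}_{11}=0$ and $\mathbf{M}_{01}=\mathbf{M}_{10}=1$. Then $\mathcal{C}=\{(0,\pm1)^T\}$ and $\beta_1^N=0$, but $\mathbf{M}(0,1)^T=(1,0)^T$.

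Your observation supplies exactly the missing ingredient: the $0$-th row and column of $\mathbf{M}$ vanish. This follows from $\mathbf{M}_{n,l}=\langle\mathcal{L}\varphi_l,\varphi_n\rangle$, $\mathcal{L}\varphi_0=0$, and self-adjointness of $\mathcal{L}$ under (J2); equivalently, $\int_\Omega\mathcal{L}u\,dx=0$. It also makes explicit that the proposition rests on the choice $\varphi_0=|\Omega|^{-1/2}$ and on mass conservation, not just on symmetry of $\mathbf{M}$.

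One small remark on your final paragraph. The identification $\mathbf{M}_{n,l}=\langle\mathcal{L}\varphi_l,\varphi_n\rangle$ does not need the double series to converge. The coefficient $a_{n,l}=\iint_{\Omega\times\Omega}J(x-y)\varphi_n(x)\varphi_l(y)\,dx\,dy$ is just a Fourier coefficient, and it equals $\langle\mathcal{K}\varphi_l,\varphi_n\rangle$ by Fubini. In any case, convergence of the expansion would require completeness of $\{\varphi_i\}$, not only $K\in L^2(\Omega\times\Omega)$.
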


\begin{proof}
    Let $\mathbf{M} = \mathbf{A} - \mathbf{B}$. Since $\mathbf{A}$ and $\mathbf{B}$ are symmetric, $\mathbf{M}$ is a real symmetric matrix. The constraint $c_0=0$ implies that we are effectively looking for the spectrum of the submatrix $\tilde{\mathbf{M}} \in \mathbb{R}^{N \times N}$ with indices $1 \dots N$.
    
    Since the set $\mathcal{C}$ is compact and the map $\mathbf{c} \mapsto \langle \mathbf{M}\mathbf{c}, \mathbf{c}\rangle$ is continuous, the supremum in \eqref{eq:variational_discrete} is attained. Let $\beta_1^N$ denote this maximum value and $\mathbf{c}^*$ be a corresponding maximizer. By the min-max principle restricted to the subspace $\mathbb{R}^N \cong \{ \mathbf{c} \in \mathbb{R}^{N+1} : c_0=0 \}$, $\beta_1^N$ coincides with the largest eigenvalue of $\tilde{\mathbf{M}}$, and $\mathbf{c}^*$ is the associated eigenvector. Consequently, the pair satisfies $(\mathbf{A}-\mathbf{B})\mathbf{c}^* = \beta_1^N \mathbf{c}^*$ with the required constraints.
\end{proof}

Finally, we show that the sequence $(\beta_1^N, v_1^N)$ of solutions to the finite dimensional problem \eqref{eq:matrix_system}-\eqref{eq:matrix_system_constraint}, converges to the solution $(\beta_1, v_1)$ of the original problem \eqref{eq:nonlocal_eigenvalue_problem}. We begin with the convergence of eigenvalues.

\begin{lem}
    \label{lem:convergence_principal_eigenvalue}
    Let $\beta_1^N$ be the principal eigenvalue of the discrete problem \eqref{eq:variational_discrete}. Then
    \begin{equation*}
        \beta_1^N \rightarrow \beta_1 \quad \text{ as } N \rightarrow \infty.
    \end{equation*}
\end{lem}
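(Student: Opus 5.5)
The plan is the standard Rayleigh--Ritz argument: $\beta_1^N$ is monotone and bounded above by $\beta_1$, and density of the Galerkin spaces gives the matching lower bound. Throughout I take $\{\varphi_i\}_{i\ge0}$ to be a complete orthonormal basis of $L^2(\Omega)$ with $\varphi_0 = |\Omega|^{-1/2}$, as in Lemma \ref{lem:galerkin_approximation}. By construction, $\langle (\mathbf{A}-\mathbf{B})\mathbf{c},\mathbf{c}\rangle = \langle \mathcal{L}v, v\rangle$ for $v=\sum_{m=0}^N c_m\varphi_m$. I would first verify this identity directly from the definitions of $a_{k,l}$ and $b_{k,l}=\langle b\varphi_k,\varphi_l\rangle$. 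The identity then says that $\beta_1^N = \sup_{v\in A_1\cap V_N}\langle\mathcal{L}v,v\rangle$.

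\textbf{Step 1 (monotonicity and upper bound).} Since $V_N\subset V_{N+1}$, we have $A_1\cap V_N\subset A_1\cap V_{N+1}\subset A_1$. Hence $\beta_1^N\le\beta_1^{N+1}\le\beta_1$, so the sequence $\beta_1^N$ is nondecreasing and bounded above by $\beta_1$. It converges to some limit $\beta^*\le\beta_1$.

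\textbf{Step 2 (lower bound via density).} Fix $\varepsilon>0$ and choose $w\in A_1$ with $\langle\mathcal{L}w,w\rangle>\beta_1-\varepsilon$. Let $P_N$ denote the orthogonal projection onto $V_N$. Because $w\perp\varphi_0$, the projection $P_Nw$ has vanishing zeroth coefficient, so $\int_\Omega P_Nw\,dx=0$. By completeness, $P_Nw\to w$ in $L^2$, so $\|P_Nw\|_2\to1$ and $w_N:=P_Nw/\|P_Nw\|_2$ lies in $A_1\cap V_N$ for large $N$, with $w_N\to w$ in $L^2$.

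\textbf{Step 3 (continuity of the form).} The operator $\mathcal{L}=K-b$ is bounded on $L^2(\Omega)$. Its integral part satisfies $\|K\|\le\|J\|_{L^1}=1$ by Young's or Schur's inequality, and $0\le b\le1$. Therefore
\[
|\langle\mathcal{L}w_N,w_N\rangle-\langle\mathcal{L}w,w\rangle|\le 2\|\mathcal{L}\|\,\|w_N-w\|_2\to0 .
\]
Thus $\beta_1^N\ge\langle\mathcal{L}w_N,w_N\rangle>\beta_1-2\varepsilon$ for $N$ large. Combined with Step 1, this gives $\beta_1^N\to\beta_1$.

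The only real obstacle is making sure the discrete quadratic form coincides with $\langle\mathcal{L}v,v\rangle$ on $V_N$. This is what requires the basis to be orthonormal and $\varphi_0$ to be the constant function, so that the constraint $c_0=0$ encodes the mean-zero condition. The identity should follow by inserting the expansion \eqref{eq:kernel_expansion} and using orthonormality, since $a_{k,l}=\iint J(x-y)\varphi_k(x)\varphi_l(y)\,dx\,dy$. Note that no spectral gap assumption is needed for eigenvalue convergence; the gap is needed only later, for convergence of the eigenfunctions.
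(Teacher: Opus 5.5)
Your proposal is correct and follows the same Rayleigh--Ritz argument as the paper. The nested sets $V_N\cap A_1\subset V_{N+1}\cap A_1\subset A_1$ give monotonicity and the bound $\beta_1^N\le\beta_1$, and approximating an arbitrary element of $A_1$ by elements of $V_N\cap A_1$, together with continuity of the quadratic form, gives the reverse inequality. You also make explicit three points the paper leaves implicit: the identity $\langle(\mathbf{A}-\mathbf{B})\mathbf{c},\mathbf{c}\rangle=\langle\mathcal{L}v,v\rangle$ on $V_N$, why the normalized projections $P_Nw/\|P_Nw\|_2$ remain in $A_1$ (because $\varphi_0$ is constant), and the bound $\|\mathcal{L}\|\le 2$.
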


\begin{proof}
    Let $V_N = \text{span}\{\varphi_0, \ldots, \varphi_N\}$ and $A_1$ be defined as in \eqref{eq:A_set_Def}. Since $V_N \cap A_1 \subset V_{N+1} \cap A_1 \subset A_1$, we have
    \begin{equation}
        \beta_1^N = \sup_{u \in V_N \cap A_1} \langle \mathcal{L} u, u \rangle \leq \sup_{u \in V_{N+1} \cap A_1} \langle \mathcal{L} u, u \rangle \leq \ldots \leq \beta_1.
    \end{equation}
    The sequence $\{\beta_1^N\}_{N \geq 1}$ is non-decreasing and bounded from above, thus it converges to a limit $\beta^* \leq \beta_1$. To prove that $\beta^* = \beta_1$, we utilize the density of the basis $\{\varphi_k\}$ in $L^2(\Omega)$. For any $u \in A_1$, there exists a sequence of projections $u_N \in V_N \cap A_1$ such that $u_N \to u$ in $L^2(\Omega)$. The continuity of the bilinear form associated with $\mathcal{L}$ yields
    \begin{equation}
        \lim_{N \to \infty} \beta_1^N = \lim_{N \to \infty} \sup_{u \in V_N \cap A_1} \langle \mathcal{L} u, u \rangle \geq \lim_{N \to \infty} \langle \mathcal{L} u_N, u_N \rangle = \langle \mathcal{L} u, u \rangle.
    \end{equation}
    Taking the supremum over $u \in A_1$ on the right-hand side confirms that $\lim_{N \to \infty} \beta_1^N = \beta_1$.
\end{proof}

Now, we prove the strong convergence of eigenfunctions.

\begin{lem}
    \label{lem:convergence_principal_eigenfunction}
    Let $v_1^N$ be the function defined by formula \eqref{eq:v_N_Def} with vector of coefficients $\mathbf{c}$ being the maximizer of the problem \eqref{eq:variational_discrete}. Then 
    \begin{equation*}
        v_1^N \rightarrow v_1 \quad \text{ as } N \rightarrow \infty.
    \end{equation*}
\end{lem}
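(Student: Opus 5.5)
The approach is to use the decomposition $\mathcal{L} = \mathcal{K} - b$. Here $\mathcal{K}u(x) = \int_\Omega J(x-y)u(y)\,dy$ is compact (Hilbert--Schmidt, since $J$ is continuous and $\Omega$ is bounded), and $b(x) = \int_\Omega J(x-y)\,dy$ is the multiplicative part. We combine this with the spectral gap $\sup\sigma_c < \beta_1$ from Proposition \ref{prop:spectral_gap} (or Lemma \ref{lem:large_domain_gap}). Strong convergence of the maximizing sequence then follows from a compactness-plus-coercivity argument. The convergence is understood up to a subsequence, and up to the sign and the choice of $v_1$ within the eigenspace.

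\textbf{Step 1 (weak limit).} Since $\|v_1^N\|_2 = 1$, we extract a subsequence $v_1^N \rightharpoonup v_1$ weakly in $L^2(\Omega)$. The constraint $\int_\Omega v_1^N = 0$ passes to the limit, and $\|v_1\|_2 \le 1$. Compactness of $\mathcal{K}$ gives $\mathcal{K}v_1^N \to \mathcal{K}v_1$ strongly, hence $\langle \mathcal{K}v_1^N, v_1^N\rangle \to \langle \mathcal{K}v_1, v_1\rangle$.

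\textbf{Step 2 (nontriviality and strong convergence).} Write $w_N = v_1^N - v_1 \rightharpoonup 0$. Expanding the quadratic form and discarding the cross terms, which vanish by weak convergence (the multiplication by bounded $b$ preserves weak convergence), we get
\[
\langle \mathcal{L}v_1^N, v_1^N\rangle = \langle \mathcal{L}v_1, v_1\rangle - \int_\Omega b\, w_N^2\,dx + o(1),
\qquad
1 = \|v_1\|_2^2 + \|w_N\|_2^2 + o(1).
\]
Set $b_{\min} = \min_{\overline\Omega} b = -\sup\sigma_c$, so that $-\int_\Omega b\,w_N^2 \le -b_{\min}\|w_N\|_2^2$. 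If $t = \lim\|w_N\|_2^2$ (passing to a further subsequence), then Lemma \ref{lem:convergence_principal_eigenvalue} gives
\[
\beta_1 \le \langle \mathcal{L}v_1, v_1\rangle - b_{\min}\, t \le \beta_1(1-t) - b_{\min}\, t.
\]
The last inequality uses $\langle \mathcal{L}v_1, v_1\rangle \le \beta_1\|v_1\|_2^2$ for zero-mean $v_1$, by homogeneity of the definition of $\beta_1$. This rearranges to $t(\beta_1 + b_{\min}) \le 0$. The spectral gap says precisely $\beta_1 > -b_{\min}$, so $t = 0$. Therefore $v_1^N \to v_1$ strongly, $\|v_1\|_2 = 1$, $v_1 \in A_1$, and $\langle \mathcal{L}v_1, v_1\rangle = \beta_1$. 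This is the step I expect to be the crux: it is exactly where the gap assumption prevents mass from escaping into the essential spectrum.

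\textbf{Step 3 (Euler--Lagrange equation).} Since $v_1$ maximizes the Rayleigh quotient on the zero-mean subspace, taking variations $v_1 + \varepsilon h$ with $\int_\Omega h = 0$ gives $\mathcal{L}v_1 - \beta_1 v_1 = \text{const}$. Integrating over $\Omega$ and using $\int_\Omega \mathcal{L}v_1 = 0$ (by the symmetry of $J$) shows the constant is zero. Alternatively, we can pass to the limit in the Galerkin equations $P_N(\mathcal{L}v_1^N - \beta_1^N v_1^N) = 0$ using strong convergence and the boundedness of $\mathcal{L}$. Either route gives $\mathcal{L}v_1 = \beta_1 v_1$.

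Finally, if $\beta_1$ is simple, normalizing the sign of $v_1^N$ yields convergence of the whole sequence. Otherwise the statement is understood subsequentially, with the limit lying in the eigenspace.
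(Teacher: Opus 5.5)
Your proof is correct, but it reaches strong convergence by a different mechanism than the paper.

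\textbf{The paper's route.} The paper first passes to the limit in the Galerkin equations $\langle \mathcal{L}v_1^N,\psi\rangle=\beta_1^N\langle v_1^N,\psi\rangle$, $\psi\in V_M$, so the weak limit already solves $\mathcal{L}v_1=\beta_1 v_1$. It uses this equation to obtain the identity $\int_\Omega(\beta_1+b)v_1^2\,dx=\langle\mathcal{K}v_1,v_1\rangle$. It compares this with the limit of $\int_\Omega(\beta_1^N+b)(v_1^N)^2\,dx=\langle\mathcal{K}v_1^N,v_1^N\rangle$, which gives convergence of the weighted norm $\|\cdot\|_*$. This norm is equivalent to the $L^2$ norm because $\beta_1+b>0$, so weak convergence upgrades to strong convergence.

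\textbf{Your route.} You never use an equation for the weak limit before strong convergence. The splitting $v_1^N=v_1+w_N$, the variational inequality $\langle\mathcal{L}v_1,v_1\rangle\le\beta_1\|v_1\|_2^2$ and the bound $b\ge b_{\min}$ force the escaping mass to satisfy $t(\beta_1+b_{\min})\le 0$. You then recover the eigenvalue equation from maximality, and the multiplier of the mean constraint vanishes since $\int_\Omega\mathcal{L}v_1\,dx=0$.

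\textbf{What each buys.} Both arguments use the same ingredients: compactness of $\mathcal{K}$, $\beta_1^N\to\beta_1$, and the gap $\beta_1+b_{\min}>0$. Your version is more general, since it applies to any maximizing sequence in $A_1$, not only Galerkin maximizers. It is therefore really a direct existence proof of a maximizer, and it shows plainly that the gap is what stops mass escaping into the essential spectrum. It also transfers verbatim to the constraint sets $A_{k+1}$ defined by orthogonality to the true eigenfunctions $v_1,\dots,v_k$. There the extra Lagrange multipliers vanish by self-adjointness, since $\langle \mathcal{L}v-\beta v, v_j\rangle=(\beta_j-\beta)\langle v,v_j\rangle=0$. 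The paper's route stays inside the Galerkin framework, identifies the limit equation directly from the discrete equations, and gets an equality of weighted norms rather than an inequality.

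Your caveat that convergence holds only along a subsequence, and up to sign or the choice within the eigenspace, is warranted. The paper's proof likewise only extracts a subsequence, even though the statement is phrased for the whole sequence.
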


\begin{proof}
    Recall that $\|v_1^N\|_{L^2} = 1$. By the Banach-Alaoglu theorem, there exists a subsequence, still denoted by $v_1^N$, and a function $v \in L^2(\Omega)$ such that $v_1^N \rightharpoonup v_1$ weakly in $L^2(\Omega)$. The zero mass constraint is preserved under weak convergence
    \begin{equation}
        \int_\Omega v_1(x) \:dx = \lim_{N \to \infty} \int_\Omega v_1^N(x) \:dx = 0.
    \end{equation}
    To identify the limit, consider an arbitrary test function $\psi \in V_M$. For $N \geq M$, the discrete equation reads $\langle \mathcal{L} v_1^N, \psi \rangle = \beta_1^N \langle v_1^N, \psi \rangle$. Passing to the limit $N \to \infty$, and using the self-adjointness of $\mathcal{L}$, we obtain
    \begin{align}
        \langle v_1, \mathcal{L} \psi \rangle &= \lim_{N \to \infty} \langle v_1^N, \mathcal{L} \psi \rangle = \lim_{N \to \infty} \langle \mathcal{L} v_1^N, \psi \rangle \\
        &= \lim_{N \to \infty} \beta_1^N \langle v_1^N, \psi \rangle = \beta_1 \langle v_1, \psi \rangle.
    \end{align}
    Since $\bigcup_{M} V_M$ is dense in $L^2(\Omega)$, this implies $\mathcal{L} v_1 = \beta_1 v_1$ in the weak sense. It remains to prove that the weak convergence $v_1^N \rightharpoonup v_1$ is strong in $L^2(\Omega)$. We decompose the non-local operator $\mathcal{L}$ into a compact integral part $\mathcal{K}$ and a local multiplication part $\mathcal{M}$:
    \begin{equation}
        \mathcal{L}u(x) = (\mathcal{K}u)(x) - (\mathcal{M}u)(x) = \int_\Omega J(x-y)u(y)\,dy - b(x)u(x),
    \end{equation}
    where $b(x) = \int_\Omega J(x-y)dy$. Theorem \ref{thm:contninous_spectrum} states that the essential spectrum is generated by the multiplication operator, yielding $\sup \sigma_c = \max_{x \in \overline{\Omega}} (-b(x))$. From the Galerkin approximation, we evaluate the energy sequence
    \begin{equation}
        \langle \mathcal{L} v_1^N, v_1^N \rangle = \beta_1^N \to \beta_1 \quad \text{as } N \to \infty.
    \end{equation}
    Using our decomposition of $\mathcal{L}$, we can rewrite this energy equation as:
    \begin{equation}
        \label{eq:weak_solution_sequence}
        \langle \mathcal{K} v_1^N, v_1^N \rangle - \int_\Omega b(x) (v_1^N(x))^2 \, dx = \beta_1^N.
    \end{equation}
    Rearranging the terms in equation \eqref{eq:weak_solution_sequence} to group the $L^2$ norms, we get
    \begin{equation} \label{eq:rearranged_energy}
        \int_\Omega (\beta_1^N + b(x)) (v_1^N(x))^2 \, dx = \langle \mathcal{K} v_1^N, v_1^N \rangle.
    \end{equation}
    
    Because $J \in C^0(\mathbb{R}^n)$ and $\Omega$ is bounded, the integral operator $\mathcal{K} : L^2(\Omega) \to L^2(\Omega)$ is a compact operator. A fundamental property of compact operators is that they map weakly convergent sequences into strongly convergent ones. Since $v_1^N \rightharpoonup v_1$ weakly in $L^2(\Omega)$, it follows that $\mathcal{K} v_1^N \to \mathcal{K} v_1$ strongly. This guarantees the convergence of the inner product
    \begin{equation}
        \label{eq:scalar_product_compact_operator}
        \lim_{N \to \infty} \langle \mathcal{K} v_1^N, v_1^N \rangle = \langle \mathcal{K} v_1, v_1 \rangle.
    \end{equation}
    Next, from Lemma \ref{lem:convergence_principal_eigenvalue} we have that $\lim_{N \to \infty} \beta_1^N = \beta_1$ thus we write
    \begin{align}
        \lim_{N \to \infty} \int_\Omega (\beta_1^N + b(x)) (v_1^N(x))^2 \, dx 
        &= \lim_{N \to \infty} \left( \beta_1^N \int_\Omega (v_1^N(x))^2 \, dx + \int_\Omega b(x) (v_1^N(x))^2 \, dx \right) \nonumber \\
        &= \lim_{N \to \infty} \left( \beta_1^N \cdot 1 + \int_\Omega b(x) (v_1^N(x))^2 \, dx \right) \nonumber \\
        &= \beta_1 + \lim_{N \to \infty} \int_\Omega b(x) (v_1^N(x))^2 \, dx \nonumber \\
        &= \lim_{N \to \infty} \left( \beta_1 \int_\Omega (v_1^N(x))^2 \, dx + \int_\Omega b(x) (v_1^N(x))^2 \, dx \right) \nonumber \\
        &= \lim_{N \to \infty} \int_\Omega (\beta_1 + b(x)) (v_1^N(x))^2 \, dx.
        \label{eq:weighted_L2_norm}
    \end{align}
    Applying formulas \eqref{eq:scalar_product_compact_operator} and \eqref{eq:weighted_L2_norm} into equation \eqref{eq:rearranged_energy} we obtain
    \begin{equation} 
        \label{eq:limit_compact}
        \int_\Omega (\beta_1 + b(x)) v_1^N(x)^2 \, dx = \langle \mathcal{K} v_1, v_1 \rangle.
    \end{equation}
    Now, recall that the weak limit $v_1$ satisfies the equation $\mathcal{L} v_1 = \beta_1 v_1$. By rewriting this exact relation using the operator decomposition and taking the inner product with $v_1$, we obtain:
    \begin{equation}
        \langle \mathcal{K} v_1, v_1 \rangle - \int_\Omega b(x) v_1(x)^2 \, dx = \beta_1 \int_\Omega v_1(x)^2 \, dx.
    \end{equation}
    Rearranging this yields
    \begin{equation} \label{eq:weak_limit_energy}
        \int_\Omega (\beta_1 + b(x)) v_1(x)^2 \, dx = \langle \mathcal{K} v_1, v_1 \rangle.
    \end{equation}
    Combining equations \eqref{eq:limit_compact} and \eqref{eq:weak_limit_energy}, we have rigorously proven that
    \begin{equation} \label{eq:norm_convergence}
        \lim_{N \to \infty} \int_\Omega (\beta_1 + b(x)) (v_1^N(x))^2 \, dx = \int_\Omega (\beta_1 + b(x)) v_1(x)^2 \, dx.
    \end{equation}
    
    Finally, we utilize the crucial spectral gap condition. From Proposition 2.1, we established that $\beta_1 > \sup \sigma_c = \max_{x \in \overline{\Omega}} (-b(x))$. This implies that for all $x \in \overline{\Omega}$:
    \begin{equation}
        \beta_1 + b(x) \geq \beta_1 - \sup \sigma_c > 0.
    \end{equation}
    Because the weight function $w(x) := \beta_1 + b(x)$ is strictly positive and bounded, it induces an equivalent norm on the Hilbert space $L^2(\Omega)$, defined by
    \begin{equation}
        \|u\|_{*} := \left( \int_\Omega (\beta_1 + b(x)) u(x)^2 \, dx \right)^{1/2}.
    \end{equation}
    
    Equation \eqref{eq:norm_convergence} demonstrates exactly that the integral of the squared sequence with this weight converges to the integral of the squared limit. 
    
    To rigorously prove that $v_1^N$ converges strongly to $v_1$ in the standard $L^2(\Omega)$ norm, we evaluate the limit of the weighted squared difference $v_1^N - v_1$. Because $\beta_1 + b(x) \geq c > 0$, we can bound the standard $L^2$ norm by this weighted integral. Expanding the square inside the integral yields:
    \begin{align}
        0 \leq c \|v_1^N - v_1\|_{L^2}^2 &= c \int_\Omega (v_1^N(x) - v_1(x))^2 \, dx \nonumber \\
        &\leq \int_\Omega (\beta_1 + b(x))(v_1^N(x) - v_1(x))^2 \, dx \nonumber \\
        &= \int_\Omega (\beta_1 + b(x))(v_1^N(x))^2 \, dx \nonumber \\
        &\quad - 2 \int_\Omega (\beta_1 + b(x))v_1^N(x)v_1(x) \, dx \nonumber \\
        &\quad + \int_\Omega (\beta_1 + b(x))(v_1(x))^2 \, dx.
    \end{align}
    
    We now take the limit as $N \to \infty$ for each of the three terms on the right-hand side. The limit of the first was already established in equation \eqref{eq:norm_convergence}. For the second term, notice that since the weight function is bounded  and $v_1^N \rightharpoonup v_1 \in L^2(\Omega)$, we have that
    \begin{equation}
        \lim_{N \to \infty} \int_\Omega v_1^N(x) \big[ (\beta_1 + b(x)) v_1(x) \big] \, dx = \int_\Omega v_1(x) \big[ (\beta_1 + b(x)) v_1(x) \big] \, dx.
    \end{equation}
    The third term is independent of $N$. Combining these limits yields that
    \begin{align}
        \lim_{N \to \infty} \int_\Omega (\beta_1 + b(x))(v_1^N(x) - v_1(x))^2 \, dx 
        &= \int_\Omega (\beta_1 + b(x))(v_1(x))^2 \, dx \nonumber \\
        &\quad - 2 \int_\Omega (\beta_1 + b(x))(v_1(x))^2 \, dx \nonumber \\
        &\quad + \int_\Omega (\beta_1 + b(x))(v_1(x))^2 \, dx \nonumber \\
        &= 0.
    \end{align}
    
    Since $0 \leq c \lim_{N \to \infty} \|v_1^N - v_1\|_{L^2}^2 \leq 0$ and $c > 0$, it immediately follows by the squeeze theorem that:
    \begin{equation}
        \lim_{N \to \infty} \|v_1^N - v_1\|_{L^2}^2 = 0.
    \end{equation}
    Thus, $v_1^N \to v_1$ strongly in $L^2(\Omega)$. Because strong convergence preserves the standard $L^2$-norm, we finally conclude $\|v_1\|_{L^2} = \lim_{N \to \infty} \|v_1^N\|_{L^2} = 1$, confirming that $v_1 \in A_1$ is a valid, non-trivial normalized eigenfunction.
\end{proof}

Naturally, we are now interested in existence of subsequent eigenpairs $(\beta_k, v_k) \in \mathbb{R}\times A_k$ solving equation \eqref{eq:nonlocal_eigenvalue_problem}, with the $k$-th set of admissible functions $A_k$ defined by relation
\begin{align}
    \begin{split}
        A_k &= \left\{ v \in L^2(\Omega): v \perp \text{span}\{v_0, \ldots, v_{k-1}\}, \: \int_\Omega v^2(x) \: dx = 1 \right\} \quad k > 1.
    \end{split}
\end{align}
We will achieve this goal using the induction. The following Lemma serves as an induction step.

\begin{lem}
    \label{lem:inductive_step}
    Suppose that there exist $k$ pairs of eigenvalues and normalized orthonormal eigenfunctions $(\beta_j, v_j) \in \mathbb{R} \times A_j$ for $j=1, \ldots, k$ solving problem (3.1). Assume that
    \begin{equation}
        \label{eq:next_beta_inequality}
        \sup \sigma_c < \sup_{v \in A_{k+1}} \langle \mathcal{L}v, v \rangle =: \beta_{k+1}.
    \end{equation}
    Then, there exists an eigenfunction $v_{k+1} \in A_{k+1}$ corresponding to the eigenvalue $\beta_{k+1}$ such that
    \begin{equation}
        \label{eq:next_eigenproblem_solution}
        \mathcal{L}v_{k+1} = \beta_{k+1} v_{k+1}.
    \end{equation}
\end{lem}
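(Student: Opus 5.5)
The plan is to repeat the argument of Lemmas \ref{lem:convergence_principal_eigenvalue} and \ref{lem:convergence_principal_eigenfunction}, now on the orthogonally restricted space. Let $E_k = \operatorname{span}\{v_0, \dots, v_k\}$. Each $v_j$ is an eigenfunction and $\mathcal{L}$ is self-adjoint, so $\mathcal{L}$ maps both $E_k$ and $E_k^\perp$ into themselves.

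To avoid the issue that $V_N$ need not contain the $v_j$, I would choose the orthonormal basis adapted to the construction. Set $\varphi_j = v_j$ for $j = 0, \dots, k$, and complete it by an orthonormal basis $\{\varphi_j\}_{j > k}$ of $E_k^\perp$. Then $V_N \cap A_{k+1}$ consists exactly of the normalized vectors with $c_0 = \dots = c_k = 0$. As in Proposition \ref{prp:finite_dimensional_minimizer}, the Galerkin problem
\[
\beta_{k+1}^N = \sup_{u \in V_N \cap A_{k+1}} \langle \mathcal{L}u, u \rangle
\]
is the largest eigenvalue of the lower-right block of $\mathbf{A} - \mathbf{B}$. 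It is attained at some $v_{k+1}^N \in V_N \cap A_{k+1}$, and the discrete Euler--Lagrange equation reads
\[
\langle \mathcal{L} v_{k+1}^N, \psi \rangle = \beta_{k+1}^N \langle v_{k+1}^N, \psi \rangle \quad \text{for all } \psi \in V_N \cap E_k^\perp .
\]
This equation also holds for $\psi \in E_k$: both sides vanish, because $\mathcal{L}\psi \in E_k$ while $v_{k+1}^N \perp E_k$.

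For the eigenvalue, monotonicity in $N$ and density of $\bigcup_N (V_N \cap E_k^\perp)$ in $E_k^\perp$ give $\beta_{k+1}^N \uparrow \beta_{k+1}$, exactly as in Lemma \ref{lem:convergence_principal_eigenvalue}. The approximants are obtained by orthogonally projecting $u \in A_{k+1}$ onto $V_N$ and renormalizing; the projection stays in $E_k^\perp$.

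For the eigenfunction, extract a weak limit $v_{k+1}^N \rightharpoonup v_{k+1}$. Orthogonality to $v_0, \dots, v_k$ passes to the weak limit. Testing against $\psi \in V_M$ and using self-adjointness gives $\mathcal{L} v_{k+1} = \beta_{k+1} v_{k+1}$ weakly, hence in $L^2(\Omega)$.

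The main obstacle is strong convergence, and in particular excluding $v_{k+1} = 0$. Here I would reuse the splitting $\mathcal{L} = \mathcal{K} - \mathcal{M}$ and the energy identity
\[
\int_\Omega (\beta_{k+1}^N + b)(v_{k+1}^N)^2 \, dx = \langle \mathcal{K} v_{k+1}^N, v_{k+1}^N \rangle .
\]
Compactness of $\mathcal{K}$ gives convergence of the right-hand side to $\langle \mathcal{K} v_{k+1}, v_{k+1} \rangle$. The eigen-relation for the limit gives $\int_\Omega (\beta_{k+1} + b) v_{k+1}^2 \, dx = \langle \mathcal{K} v_{k+1}, v_{k+1} \rangle$.

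Hypothesis \eqref{eq:next_beta_inequality} together with Theorem \ref{thm:contninous_spectrum} yields $w = \beta_{k+1} + b \ge c > 0$ on $\overline{\Omega}$. So the weighted norms converge, and the expansion of $\int_\Omega w (v_{k+1}^N - v_{k+1})^2 \, dx$ tends to $0$. This gives strong convergence, hence $\|v_{k+1}\|_2 = 1$ and $v_{k+1} \in A_{k+1}$.

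The delicate points I would check are twofold. First, the weighted-norm step uses $\beta_{k+1}^N \to \beta_{k+1}$ with $\|v_{k+1}^N\|_2 = 1$. Second, the basis choice ensures the orthogonality constraints are exact at every level $N$, not merely in the limit.
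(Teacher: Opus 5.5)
Your proof is correct, and its analytic core matches the paper's. Both use Galerkin maximizers on a truncated constraint set and monotone convergence $\beta_{k+1}^N \uparrow \beta_{k+1}$ by density. Both take a weak limit, split $\mathcal{L} = \mathcal{K} - \mathcal{M}$ with compact $\mathcal{K}$, and use the weighted norm with weight $\beta_{k+1} + b \geq c > 0$ supplied by the gap hypothesis.

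The genuine difference is how you handle the orthogonality constraints. The paper keeps a fixed basis with $\varphi_0$ constant and builds $v_{k+1}^N$ from the $(k+1)$-th eigenvector $\mathbf{c}_{k+1}$ of the Galerkin matrix. It passes orthogonality to the limit via the discrete orthogonality $\langle v_{k+1}^N, v_j^N \rangle = 0$ together with strong convergence $v_j^N \to v_j$ of the earlier approximants. It then tests the equation only against $\psi \in A_{k+1}$.

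Your adapted basis $\varphi_j = v_j$ for $j \le k$ makes $c_0 = \dots = c_k = 0$ literally equivalent to membership in $A_{k+1}$ at every level $N$. For a generic basis, $c_1 = \dots = c_k = 0$ only encodes orthogonality to $\varphi_1, \dots, \varphi_k$. As a result, orthogonality survives mere weak convergence. You never need the earlier Galerkin eigenvectors to converge, let alone to the prescribed $v_j$; that requirement is delicate along subsequences or when some $\beta_j$ is multiple.

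Your invariance observation ($\mathcal{L}E_k \subset E_k$, hence $\mathcal{L}E_k^\perp \subset E_k^\perp$) extends the discrete equation to all of $V_N$. Testing against $V_M$ then upgrades it to $\mathcal{L}v_{k+1} = \beta_{k+1} v_{k+1}$ on all of $L^2(\Omega)$, a step the paper leaves implicit. In effect, you reduce the inductive step to the base case applied to $\mathcal{L}$ restricted to the invariant subspace $E_k^\perp$.

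The price is that your basis depends on the previously constructed eigenfunctions. It is therefore not a single fixed discretization producing all modes at once, which is what the paper's Courant--Fischer-style scheme aims for.
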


\begin{proof}
    Let $k \geq 1$. We proceed using the approximation technique analogous to Lemmas \ref{lem:galerkin_approximation}, \ref{lem:convergence_principal_eigenvalue} and \ref{lem:convergence_principal_eigenfunction}. Let us consider the truncated problem on the subspace $A_{k+1} \cap V_N$ with $V_N = \text{span}\{\varphi_0, \ldots, \varphi_N\}$ and $N \geq k$. The discrete counterpart of the operator $\mathcal{L}$ again corresponds to the symmetric matrix $\mathbf{M} = \mathbf{A} - \mathbf{B}$ of size $(N+1) \times (N+1)$ restricted to the subspace $\mathcal{C} = \{ \mathbf{c} \in \mathbb{R}^{N+1} : c_0 = \ldots = c_k = 0, \|\mathbf{c}\| = 1\}$. We know that $\mathbf{M}$ has a complete orthonormal set, composed of eigenvectors $\mathbf{c}_1, \ldots, \mathbf{c}_N$ corresponding to eigenvalues $\beta_1^N \geq \beta_2^N \geq \ldots \geq \beta_N^N$. 

    Firstly, the sequence of eigenvalues $\{\beta_{k+1}^N\}_{N \geq k+1}$ is non-decreasing and bounded by $\beta_{k+1}$. Moreover, for every $u \in A_{k+1}$ there exists a sequence $u_i \rightarrow u$ converging strongly in $L^2(\Omega)$ as $i \rightarrow \infty$. Testing the expression \ref{eq:next_beta_inequality} with $u_i$, we get that $\beta_{k+1}^N \geq \langle \mathcal{L} u_i, u_i \rangle$. Now passing to the limit and taking supremum, we get that
    $\beta_{k+1}^N \rightarrow \beta_{k+1}$ as $N \rightarrow \infty$.
    
    Denote the function constructed from the vector $\mathbf{c}_{k+1} = (c_0^{k+1}, c_1^{k+1}, \ldots, c_N^{k+1})$ by the formula
    \begin{equation}
        v_{k+1}^N(x) = \sum_{m=0}^N c^{k+1}_m \varphi_m(x).
    \end{equation}
    Note that $\{v_{k+1}^N\}_{N \geq k+1}$ is bounded in $L^2(\Omega)$, thus up to a subsequence, it converges weakly to a function $v_{k+1} \in L^2(\Omega)$. Our goal is to show, that this seqence converges strongly, and $v_{k+1} \in A_{k+1}$.

    For any $j \in \{0, 1, \ldots, k\}$, utilizing the strong convergence $v_j^N \to v_j$ and the discrete orthogonality $\langle v_{k+1}^N, v_j^N \rangle = 0$ in the finite-dimensional subspace $V_N$, we can strictly bound the inner product
    \begin{align}
        |\langle v_{k+1} , v_j \rangle| &= \lim_{N \rightarrow \infty} |\langle v_{k+1}^N, v_j \rangle| \nonumber \\
        &= \lim_{N \rightarrow \infty} |\langle v_{k+1}^N, v_j^N \rangle + \langle v_{k+1}^N, v_j - v_j^N \rangle| \nonumber \\
        &\leq \lim_{N \rightarrow \infty} \left( 0 + \|v_{k+1}^N\|_2 \|v_j - v_j^N\|_2 \right) \\
        &= \lim_{N \rightarrow \infty} \|v_j - v_j^N\|_2 = 0. \nonumber
    \end{align}
    
    Now, we verify that $v_{k+1}$ is a weak solution of the eigenvalue problem. Fix $\psi \in A_{k+1}$, using Lemma \ref{lem:convergence_principal_eigenvalue} and weak convergence of $v_{k+1}^N$, we get
    \begin{align}
        \begin{split}
            \langle v_{k+1}, \mathcal{L} \psi \rangle
            &= \lim_{N \to \infty} \langle v_{k+1}^N, \mathcal{L} \psi \rangle = \lim_{N \to \infty} \langle \mathcal{L} v_{k+1}^N, \psi \rangle \\
            &= \lim_{N \to \infty} \beta_{k+1}^N \langle v_{k+1}^N, \psi \rangle = \beta_{k+1} \langle v_{k+1}, \psi \rangle.
        \end{split}
    \end{align}
    
    Now, to prove strong convergence of $v_{k+1}^N$ to $v_{k+1}$, we use the same technique, as in the proof of strong convergence in Lemma \ref{lem:convergence_principal_eigenfunction}. In particular, using compactness of the integral part of $\mathcal{L}$ we obtain
    \begin{equation}
        \label{eq:next_eigenfunction_convergence_weighted}
        \lim_{N \rightarrow \infty} \int_\Omega (\beta_{k+1} + b(x)) (v_{k+1}^N)^2\:dx = \int_\Omega (\beta_{k+1} + b(x))v_{k+1}^2\:dx.
    \end{equation}
    As we assumed that $\sup \sigma_c < \beta_{k+1}$, there exists a constant $c > 0$, such that 
    \begin{equation}
        0 < c < \beta_{k+1} - \sup \sigma_c \leq \beta_{k+1} + b(x)
    \end{equation}
    Hence, using the weak convergence of $v_{k+1}^N$ and formula \eqref{eq:next_eigenfunction_convergence_weighted}, we get that
    \begin{align}
        \begin{split}
            0 \leq c \lim_{N\rightarrow \infty}\|v_{k+1}^N - v_{k+1}\|_{L^2}^2 
            &= \lim_{N\rightarrow \infty} c \int_\Omega (v_{k+1}^N(x) - v_{k+1}(x))^2 \, dx \nonumber \\
            &\leq \lim_{N\rightarrow \infty} \int_\Omega (\beta_{k+1} + b(x))(v_{k+1}^N(x) - v_{k+1}(x))^2 \, dx \nonumber \\
            &= \lim_{N\rightarrow \infty} \int_\Omega (\beta_{k+1} + b(x))(v_{k+1}^N(x))^2 \, dx \nonumber \\
            &- 2 \lim_{N\rightarrow \infty} \int_\Omega (\beta_1 + b(x))v_{k+1}^N(x)v_{k+1}(x) \, dx \nonumber \\
            &+ \int_\Omega (\beta_1 + b(x))(v_{k+1}(x))^2 \, dx \\
            &= 0.
        \end{split}
    \end{align}
    This proves that $v_{k+1}^N \rightarrow v_{k+1}$ strongly in $L^2(\Omega)$ as $N \rightarrow \infty$. This concludes the Lemma.
\end{proof}

\bibliographystyle{siam}
\bibliography{bibliography}

\end{document}